\documentclass{article}[margin=0.5in]
\usepackage{graphicx} \usepackage{graphicx}
\usepackage{latexsym}
\usepackage{amssymb}
\usepackage{lineno}
\usepackage{enumerate}
\usepackage{amsmath}
\usepackage{amsthm}
\usepackage{color,soul}
\usepackage{authblk}
\usepackage{mathtools}
\usepackage{tikz}
\usetikzlibrary{positioning}
\usetikzlibrary{calc}
\usetikzlibrary{shapes,snakes}

\newcommand{\myradius}{3}
\newcommand{\mycirclesize}{16pt}
\newcommand{\rightvertexdist}{10} 
\newcommand{\rightvertexsize}{35pt} 

\newcommand{\drawlabels}[1][4]{
    \foreach \j in {1,2,...,#1}
    {
        \node[circle,draw,minimum size=\mycirclesize,fill=white] (\j) at ({(\j*360/#1)-19}:\myradius) {};
        \node at (\j) {\j};
    }
}

\definecolor{psychedelicpurple}{rgb}{0.87, 0.0, 1.0}
\definecolor{uclagold}{rgb}{1.0, 0.7, 0.0}

\makeatletter 
\tikzset{curlybrace/.style={rounded corners=2pt,line cap=round}}%
\pgfkeys{
/curlybrace/.cd,%
tip angle/.code     =  \def\cb@angle{#1},
/curlybrace/.unknown/.code ={\let\searchname=\pgfkeyscurrentname
                              \pgfkeysalso{\searchname/.try=#1,
                              /tikz/\searchname/.retry=#1}}}  
\def\curlybrace{\pgfutil@ifnextchar[{\curly@brace}{\curly@brace[]}}%

\def\curly@brace[#1]#2#3#4{%
\pgfkeys{/curlybrace/.cd,
tip angle = 0.75}%
\pgfqkeys{/curlybrace}{#1}%
\ifnum 1>#4 \def\cbrd{0.05} \else \def\cbrd{0.075} \fi
\draw[/curlybrace/.cd,curlybrace,#1]  (#2:#4-\cbrd) -- (#2:#4) arc (#2:{(#2+#3)/2-\cb@angle}:#4) --({(#2+#3)/2}:#4+\cbrd) coordinate (curlybracetipn);
\draw[/curlybrace/.cd,curlybrace,#1] ({(#2+#3)/2}:#4+\cbrd) -- ({(#2+#3)/2+\cb@angle}:#4) arc ({(#2+#3)/2+\cb@angle} :#3:#4) --(#3:#4-\cbrd);
}

\def\curlybraceflipped[#1]#2#3#4{%
\pgfkeys{/curlybrace/.cd,
tip angle = 0.75}%
\pgfqkeys{/curlybrace}{#1}%
\ifnum 1>#4 \def\cbrd{0.05} \else \def\cbrd{0.075} \fi
\draw[/curlybrace/.cd,curlybrace,#1]  (#2:-#4-\cbrd) -- (#2:-#4) arc (#2:{(#2+#3)/2-\cb@angle}:-#4) --({(#2+#3)/2}:-#4+\cbrd) coordinate (curlybracetipn);
\draw[/curlybrace/.cd,curlybrace,#1] ({(#2+#3)/2}:-#4+\cbrd) -- ({(#2+#3)/2+\cb@angle}:-#4) arc ({(#2+#3)/2+\cb@angle} :#3:-#4) --(#3:-#4-\cbrd);
}
\makeatother

\newcommand{\Mod}[1]{\ (\mathrm{mod}\ #1)}

\newtheorem{theorem}{Theorem}
\newtheorem{question}{Question}
\newtheorem{lemma}{Lemma}
\newtheorem{observation}{Observation}
\newtheorem{corollary}{Corollary}
\newtheorem{example}{Example}

\begin{document}

\title{Embedding equitable $(s,p)$-edge-colorings of $K_n$}
\author{}
\date{}

\maketitle

\vspace{-50pt}

\begin{center}

\begin{tabular}{ccc} \large Stacie Baumann$^1$ & \large Mika Olufemi$^1$ & \large Stafford Yerger$^{1,2}$ \\ \texttt{\small baumannsm@cofc.edu} & \texttt{\small olufemimo@g.cofc.edu} & \texttt{\small staffordyerger@gmail.com} \end{tabular}\\ \vspace{8pt} \small $^1$ Department of Mathematics, College of Charleston, SC, USA\\ $^2$ Department of Physics and Astronomy, Louisiana State University, LA, USA \end{center}

\begin{abstract}
 An $(s,p)$-edge-coloring of a graph $G$ is an edge coloring using $s$ colors such that exactly $p$ colors appear at each vertex. To generalize the notion of proper edge-coloring, these colorings are defined to be equitable: the numbers of edges of each color incident to a vertex are fairly distributed. We find the necessary and sufficient conditions for embedding an equitable $(s_1,p_1)$-edge-coloring of $K_{n_1}$ into an equitable $(s_2,p_2)$-edge-coloring of $K_{n_2}$. We focus on the values of $n_1$, $p_1$, $n_2$, and $p_2$ where $s_1$ is necessarily larger than $p_1$ and $s_2$ is necessarily larger than $p_2$.
\end{abstract}




\section{Introduction}

We motivate this paper with a scheduling problem similar to the one discussed by Baumann and Rodger in \cite{mine}.
Suppose a schedule was set for $n_1=25$ people to meet $p_1=8$ hours and take a break for $s_1-p_1=1$ hour during a $s_1=9$ hour day of a convention.
Each person is scheduled to meet one-on-one with $(n_1-1)/p_1=3$ people per hour.
However, now $6$ more people decide to come to the convention. Now there are $n_2=31$ people.
Can we form a schedule so each person meets $p_2=10$ out of the $s_2=11$ hours of the day and they still meet with $(n_2-1)/p_2=3$ people per hour without changing the previous schedule?

The schedules of this form can be modeled with the complete graph $K_{n_i}$ where each vertex represents a person.
The edges of $K_{n_i}$ are colored with $s_i$ colors where an edge $\{u,v\}$ is colored $k$ if person $u$ and $v$ meet during the $k$th hour of the conference.
Because each person is scheduled to meet during $p_i$ hours and the meeting should be spread as equally as possible during those hours, each vertex is incident to $n_i-1/p_i$ edges of each of the $p_i$ colors.
As we will define shortly, this problem asks if an equitable $(s_1,p_1)$-edge-coloring of $K_{n_1}$ can be embedded in an equitable $(s_2,p_2)$-edge-coloring of $K_{n_2}$. 
Note that completing this schedule would require scheduling the one-on-one meetings within each hour, but that is not the focus of this paper.

We say that $W: E(G) \mapsto C = \{1, 2, \dots, s\}$ is an \emph{$(s,p)$-edge-coloring} of a graph $G$ if:
\begin{enumerate}
    \item the edges in $E(G)$ are colored with exactly $s$ colors, and
    \item for each vertex $u \in V(G)$ the edges incident to $u$ are colored using exactly $p$ colors.
\end{enumerate}
    
\noindent{The ($s$,$p$)-edge-coloring, $W$, is said to be \emph{equitable} if}

\begin{enumerate}
\setcounter{enumi}{2}
    \item for each vertex $u \in V(G)$ and for each $\{i,j\} \subseteq C(W,u)$, $\vert b(W, u, i) - b(W, u, j) \vert \leq 1$,
\end{enumerate}
where $C(W,u) = \{i \mid W \text{ colors some edge incident with } u \text{ with color } i\}$, and $b(W, u, i)$ is the number of edges incident to $u$ that are colored $i$ by $W$.
Note that this definition of equitable generalizes the definition used in the usual edge-coloring situation where $s = p$.

These edge-colorings were first defined in terms of equitable $(s,p)$-block-colorings.
There are several papers studying the existence and structure equitable $(s,p)$-block-colorings and equitable $(s,p)$-edge-colorings. See \cite{mine, MR2007895, MR2684082, MR2438863, MR2629918, MR3774803, MR3475564, MR3764373}.
Also, \cite{baumannthesis} summarizes the results.

We say an equitable $(s_1,p_1)$ edge-coloring of $G_1$, say $W_1$, can be \emph{embedded} in an equitable $(s_2,p_2)$-edge-coloring of $G_2$, say $W_2$, if there exists an equitable $(s_2,p_2)$-edge-coloring of $G_2$ with a proper subgraph of $G_1$ with coloring $W_1$.
Note that this is different from embedding a graph in the plane and using the word ``embed" here follows the convention used in design theory.

A more specific version of the following question was asked in \cite{baumannthesis}.

\begin{question}
\label{mainquest}
When can an equitable $(s_1, p_1)$-edge-coloring of $K_{n_1}$ be embedded in an equitable $(s_2, p_2)$-edge-coloring of $K_{n_2}$?
\end{question}

In this paper, we find the necessary and sufficient conditions to answer this question for an interesting case where $s_1$ and $s_2$ are necessarily larger than $p_1$ and $p_2$, respectively, and the number of edges of each color at each vertex is preserved.
The formal definition of this case is provided in Section \ref{subsectdefandprevresults}.
Note that the scheduling problem discussed at the beginning of this section corresponds to a specific example of one of these embeddings.
Other cases of Question \ref{mainquest} are discussed in Section \ref{futuresection}.

Embedding or extending a mathematical object to another mathematical object is widely studied.
There are many results about embedding designs.
Some of the first of these results were Ryser and Cruse's results about latin squares.
Ryser \cite{Ryser} and Cruse \cite{cruse1974embedding} each proved embedding results about latin squares that are equivalent to finding necessary and sufficient conditions for embedding a proper edge-coloring of the complete bipartite graph $K_{r,s}$ into a proper edge-coloring of $K_{n,n}$ and embedding a proper complete graph with a loop on every vertex $\mathbb{K}_r$ into a proper edge-coloring of $\mathbb{K}_n$ (where a color on the loop contributes $1$ to the degree of the vertex to not contradict the coloring being proper).
As discussed in Lemma \ref{thm:b(n)=1}, Cruse's Theorem \cite{cruse1974embedding} for embedding symmetric latin squares actually proves a special case of our main result.
These results started a long study of embedding designs;
For more embedding results, see the following survey and books \cite{colbourn2010crc, designbook, rodgersurvey}.

Extensions of both vertex and edge-colorings of graphs has also been studied.
We discuss a few recent results about extending edge-colorings.
Hilton and Andersen \cite{andersen1989extending} and Hendersen and Hilton \cite{henderson2004completing} studied extending proper edge-colorings of $K_{n_1}$ and independent edges to a proper edge-coloring of $K_{n_2}$.
Marcotte and Seymour found a necessary condition for extending a proper $k$-edge-coloring of a subgraph of $G$ to a proper $k$-edge-coloring of $G$ and determined some classes of graphs that this is sufficient.
Edwards et. al. considered when a matching in $G$ can be extended to proper-edge-coloring of $G$ \cite{edwards2018extension}.
Harrelson et. al. proved results about properly coloring a graph with precolored edges in planar graphs \cite{harrelson2019list}.
There have also been several results about extending a partial $r$-factorization of of $K_{n_1}$ into $r$-factorizations of $K_{n_2}$.
Most recently Bahmanian, Johnsen, and Napirata proved several results and provide a history of this problem in \cite{bahmanian2022embedding, bahmanian2025embedding}.

\subsection{Definitions and Previous Results}
\label{subsectdefandprevresults}

Let $\chi'_{p}(K_n)$ be the smallest integer $s$ such that there exists an equitable $(s,p)$-edge-coloring of $K_n$.
The following theorem highlights an interesting case where $\chi'_{p}(K_n)>p$.
For a summary of how to combine the results cited to form the theorem, see \cite{baumannthesis}.

\begin{theorem} \cite{MR1271864, MR3774803, MR3475564} \label{mainlambda1} For $1 \leq p \leq n-1$,
$$
\chi'_p(K_n) =
\begin{cases} 
      p+1 & \text{ if } $p$ \text{ even and } n \equiv p+1 \Mod{2p}\\
      p & \text{ otherwise.}
   \end{cases}
$$
\end{theorem}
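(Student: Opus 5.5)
Note first that $\chi'_p(K_n)\ge p$ trivially: every vertex sees exactly $p$ colors, so at least $p$ colors are used. The plan is therefore to prove two things. First, when $p$ is even and $n\equiv p+1 \Mod{2p}$, there is no equitable $(p,p)$-edge-coloring, and an equitable $(p+1,p)$-edge-coloring does exist. Second, in every other case an equitable $(p,p)$-edge-coloring exists.

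\textbf{Lower bound in the exceptional case.} Suppose $p$ is even, $n=p+1+2pk$, and we have an equitable $(p,p)$-edge-coloring. Then $n-1=p(2k+1)$, so equitability forces every vertex to meet exactly $2k+1$ edges of each color. Hence each color class is a $(2k+1)$-regular spanning subgraph of $K_n$. Since $n$ is odd and $2k+1$ is odd, the degree sum $n(2k+1)$ is odd, which is impossible. So $\chi'_p(K_n)\ge p+1$ here.

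\textbf{Constructions.} In the exceptional case I would build a $(p+1,p)$-coloring as follows. Since $n$ is odd, $K_n$ has a Hamiltonian decomposition into $(n-1)/2=p(2k+1)/2$ Hamilton cycles. Group these cycles into $p+1$ color classes so that each vertex misses exactly one color and sees $2k+1$ edges of each of the others. One natural approach is to cyclically rotate which color is absent, for example via Walecki's decomposition with vertex-dependent assignment. A cleaner alternative is to use $p+1$ factors of a near-$(2k+1)$-factorization, where removing one vertex class per color, in the style of a near-one-factorization of $K_{2m+1}$, generalizes. I expect this construction to be the main obstacle: each color class must be a graph whose non-isolated vertices have degree $2k+1$ and whose isolated vertices partition $V$ across the $p+1$ colors. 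The counting works because $(p+1)$ classes, each omitting about $n/(p+1)$ vertices, is consistent with the parity requirement that the number of non-isolated vertices be even. I would realize it by taking $K_n$ on $\mathbb{Z}_n$ and using difference methods.

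For the non-exceptional cases I need an equitable $(p,p)$-coloring, which is an equitable proper-type coloring in which every vertex sees all $p$ colors with multiplicities $\lfloor (n-1)/p\rfloor$ or $\lceil (n-1)/p\rceil$. I would obtain it from a $1$-factorization of $K_n$ ($n$ even) or a near-$1$-factorization ($n$ odd), merging the $n-1$ (respectively $n$) classes into $p$ groups of nearly equal size. This is immediate when $n$ is even. When $n$ is odd, each vertex misses exactly one near-factor, so I would choose group sizes and the placement of missing vertices so that the per-vertex counts are balanced; if this cannot be arranged, I would invoke the cited equitable edge-coloring theorems (Hilton–de Werra type results) directly. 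The remaining case, $p$ odd or $n\not\equiv p+1 \Mod{2p}$, is where that balancing succeeds, which is exactly why the exceptional case is excluded.
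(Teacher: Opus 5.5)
Your lower bounds are right. $\chi'_p(K_n)\ge p$ is trivial, and your parity argument (every color class would be a $(2k+1)$-regular spanning subgraph of $K_n$ with $n$ odd) is exactly why the exceptional case needs $p+1$ colors. The paper itself does not prove this theorem; it only cites the literature. So all the constructive work falls on you, and there the proposal has genuine gaps.

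The main gap is the equitable $(p+1,p)$-edge-coloring in the exceptional case. Your first idea cannot work: any union of Hamilton cycles is spanning with all degrees even. But each color must be absent at some vertex and have odd degree $2k+1$ wherever it appears. Your second idea, a near-$(2k+1)$-factorization by difference methods, is named but never constructed, and that construction is the whole content of this upper bound.

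A short construction that does work: write $p=2t$ and $b=2k+1$.
\begin{enumerate}
\item Take a proper $(2t+1)$-edge-coloring of $K_{2t+1}$ on $w_0,\dots,w_{2t}$ in which color $c$ is missing at $w_c$.
\item Keep $w_0$, and replace each $w_j$ ($j\ge 1$) by a set $V_j$ of $b$ vertices.
\item Color all edges between $V_i$ and $V_j$ with the color of $w_iw_j$.
\item Color both the edges from $w_0$ to $V_j$ and the edges inside $V_j$ with the color of $w_0w_j$.
\end{enumerate}
Then each vertex of $V_j$ sees exactly $b$ edges of every color except $j$; the $b-1$ internal edges plus the edge to $w_0$ supply the color of $w_0w_j$. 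Likewise $w_0$ sees $b$ edges of every color except $0$. For $t=1$, $b=3$ this is exactly the coloring in Figure~\ref{expic}(a).

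In the non-exceptional case, $n$ even is fine. For $n$ odd, however, grouping whole near-$1$-factors is equitable only if all groups have the same size, i.e.\ $p\mid n$. Otherwise, take the vertex missed by a near-$1$-factor lying in a smallest group, of size $g$: it sees that color $g-1$ times and some other color at least $g+1$ times. So your closing claim that this balancing succeeds throughout the non-exceptional range is false (e.g.\ $n=7$ with $p=3$ or $p=4$).

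Falling back on Hilton--de Werra is legitimate when $p\nmid n-1$, since then no vertex degree is divisible by $p$. It does not cover $n$ odd, $p\mid n-1$, $(n-1)/p$ even, where every degree is divisible by $p$ and the hypothesis fails. There you should instead split the $(n-1)/2$ Hamilton cycles of $K_n$ into $p$ classes of $(n-1)/(2p)$ cycles each.

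With these repairs the case analysis is complete:
\begin{itemize}
\item $p\nmid n-1$: Hilton--de Werra.
\item $p\mid n-1$ with $n$ even: group $1$-factors.
\item $n$ odd with $(n-1)/p$ even: group Hamilton cycles.
\item $n$ odd with $(n-1)/p$ odd: this is exactly the exceptional case.
\end{itemize}
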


So when $p$ even and $n \equiv p+1 \Mod {2p}$, $s$ is necessarily larger than $p$.
This case is interesting because when $s>p$ the typical edge-coloring approach of interchanging colors along a $2$-edge-colored path is not viable.

Let $W$ be a $(s,p)$-edge-coloring of $K_n$. Define $b(n, W, u, i)$ as the number of edges incident to $u$ that are colored $i$ by $W$.
Matson and Rodger proved the following.

\begin{observation} \cite{MR3764373}
\label{bobservation}
Let $W$ be a $(2t+1,2t)$-edge-coloring of $K_n$ with the colors in $C$ for $n=4tx+2t+1$ for some integer $x$.
Then $b(n, W, u, i)= \frac{n-1}{2t}=2x+1$ for all $u \in V(K_n)$ and $i \in C$.
\end{observation}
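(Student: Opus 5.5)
The plan is to argue locally at a single vertex. Throughout, I read $W$ as an \emph{equitable} $(2t+1,2t)$-edge-coloring. Without equitability the counts could be arbitrary, and the paper only considers equitable colorings. I also read ``$i\in C$'' as $i\in C(W,u)$, the $2t$ colors actually present at $u$. Each vertex sees exactly $p=2t$ of the $s=2t+1$ colors, so the one missing color has $b(n,W,u,i)=0$ and the formula cannot hold for it.

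Fix $u\in V(K_n)$. Its degree is
$$n-1=4tx+2t=2t(2x+1),$$
and these edges are split among the $2t$ colors of $C(W,u)$. By the equitable condition (condition 3 of the definition), any two of these counts differ by at most $1$. Hence there are an integer $m$ and an integer $k$ with $0\le k<2t$ such that exactly $k$ colors appear $m+1$ times at $u$ and the other $2t-k$ colors appear $m$ times. The degree count then gives
$$2t\,m+k=2t(2x+1).$$

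Reducing modulo $2t$ gives $k\equiv 0$, so $k=0$ and $m=2x+1$. Therefore every color in $C(W,u)$ appears exactly $2x+1=\frac{n-1}{2t}$ times at $u$. Since $u$ was arbitrary, this proves the statement.

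This is essentially just a pigeonhole or averaging argument, so I expect no real obstacle. The only care needed is the bookkeeping above: pinning down that the counts take at most two consecutive values, and observing that their average $\frac{n-1}{2t}$ is an integer, which forces them all to be equal.

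A useful remark for later sections follows from the same setup. For each color $i$, every vertex has degree $0$ or $2x+1$ in the color class of $i$, and $2x+1$ is odd. By the handshake lemma, the number of vertices that see color $i$ is therefore even.
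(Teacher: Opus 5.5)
Your argument is correct. Exactly $2t$ colors appear at $u$, and their counts take at most two consecutive values summing to $n-1=2t(2x+1)$, so the number $k$ of colors with the larger count is a multiple of $2t$ in $[0,2t)$. Hence $k=0$ and every color present at $u$ occurs exactly $2x+1$ times.

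The paper gives no proof of its own; it only cites \cite{MR3764373}. Your two readings are needed, not pedantic. As printed, the statement omits ``equitable''. Without it the claim fails: in $K_7$, color the edges inside a $K_3$ with one color, the edges inside the complementary $K_4$ with a second color, and all edges between them with a third. This is a $(3,2)$-edge-coloring in which the $K_3$ vertices have counts $2$ and $4$. Also, for the one color missing at $u$, $b(n,W,u,i)=0$, which is why the statement must be read with $i\in C(W,u)$.

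Your closing handshake remark is exactly Lemma~\ref{colorvectorlemma}(i).
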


By the previous observation, $b(n, W, u, i)$ is independent of $u$ and $i$.

As stated in the previous section, we find the necessary and sufficient conditions to answer Question \ref{mainquest} for an interesting case where $s_1$ and $s_2$ and necessarily larger than $p_1$ and $p_2$, respectively, and the number of edges of each color at each vertex is preserved.
Therefore, in light of Theorem \ref{mainlambda1}, for the remainder of the paper we will assume $n_1 \equiv 2t_1 + 1 \mod 4t_1$ and $n_2 \equiv 2t_2 + 1 \mod 4t_2$ for some integers $1 \leq t_1 < t_2$.
Also, by Observation \ref{bobservation} and because we would like to preserve the number of edges of each color at each vertex, for the remainder of the paper we will assume $n_1=p_1b+1$ and $n_1=p_2b+1$ for some odd integer $b$.
Therefore, we answer the following question.

\begin{question}
\label{mainquestbpreserved}
Let $1 \leq t_1 < t_2$ be integers and $b$ be an odd integer.
When can an equitable $(2t_1+1, 2t_1)$-edge-coloring of $K_{n_1}$ be embedded in an equitable $(2t_2+1, 2t_2)$-edge-coloring of $K_{n_2}$ where $n_1=2t_1b+1$ and $n_2=2t_2b+1$?
\end{question}

In summary, assume the following throughout the remainder of the paper.
\begin{itemize}
\item $1 \leq t_{1} < t_{2}$ are integers
\item $b \geq 1$ is an odd integer
\item $p_i = 2t_i$, $s_i=2t_i+1$, and $n_i = 2t_ib+1$ for $1 \leq i \leq 2$
\item $V_{O} \coloneqq \{v_1, v_2, \dots v_{n_1}\}$ and $V_{N} \coloneqq \{v_{n_1+1}, v_{n_1+2}, \dots v_{n_2}$\}
(Note that $V_O$ are the ``Old vertices" and $V_N$ are the ``New vertices")
\item $C_O \coloneqq \{1, 2, \dots, s_1\}$ and $C_N := \{s_1+1,s_1+2,...,s_2\}$ (Note that the colorings in $C_O$ are the ``Old colors" and in $C_N$ are the ``New colors")
\item $m \coloneqq \vert C_N \vert =s_2-s_1$ (So, $m$ is the ``number of new colors")
\item $V(K_{n_2}) = \{v_1, v_2, \dots, v_{n_2}\}$ and the edges of the graph induced by the vertices $\{v_1, v_2, \dots, v_{n_1}\}$ have an arbitrary equitable $(s_1, p_1)$-edge-coloring of $K_{n_1}$ with the colors in $C_O$
\end{itemize}




We now discuss an observation and lemma that will be used frequently throughout the paper.

We say a vertex is \emph{missing} a color if there is no edge incident to that vertex colored with that color.
We use the following observation frequently throughout the paper without referencing it.

\begin{observation} \label{missingone}
In an equitable $(p+1,p)$-edge-coloring of a graph $G$, each $v \in V(G)$ is missing exactly one color.
\end{observation}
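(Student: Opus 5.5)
By definition, an equitable $(p+1,p)$-edge-coloring $W$ of $G$ has the property that every vertex $v \in V(G)$ sees exactly $p$ colors, so $|C(W,v)| = p$. It also uses exactly $s = p+1$ colors in total, so the color set $C$ of $W$ has $|C| = p+1$. Since $C(W,v) \subseteq C$, the number of colors of $C$ that do not appear on any edge at $v$ is
$$|C \setminus C(W,v)| = (p+1) - p = 1.$$
By the definition of missing, this says that $v$ is missing exactly one color, namely the unique element of $C \setminus C(W,v)$.

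The argument is just this count, carried out for an arbitrary vertex $v$. Equitability is not needed; the observation holds for any $(p+1,p)$-edge-coloring. The one point to be careful about is that ``missing'' is measured against the full color set $C$ of size $s = p+1$, which is guaranteed because condition 1 of the definition requires that all $s$ colors actually be used somewhere in $G$. There is no real obstacle beyond stating these two cardinalities correctly.
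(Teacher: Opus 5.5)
Your proof is correct, and it is the same reasoning the paper relies on. The paper states this as an observation without proof because it follows directly from the definition: there are $s=p+1$ colors, exactly $p$ appear at each vertex, so $|C\setminus C(W,v)|=1$. Your remark that equitability is not needed is also right.
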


The following term was introduced in \cite{MR3774803}.
The \emph{color vector} of an equitable $(s,p)$-edge-coloring $W$ of a graph $G$ is the vector $V_W=(c_1(W),c_2(W), \dots, c_s(W))$ in which, for $1 \leq i \leq s$, $c_i(W)$ is the number of vertices in $G$ that are incident with an edge of color $i$; by renaming colors if necessary, it is always assumed that $c_1(W) \leq c_2(W) \leq \dots \leq c_s(W)$.

\begin{lemma}[\cite{MR3774803}]\label{colorvectorlemma} Let $n=2tb+1$ for some odd positive integer $b$ and some positive integer $t$, $p=2t$, and $s=2t+1$. In any equitable $(s,p)$-edge-coloring $W$ of $K_n$, for $1 \leq i \leq s$,
\begin{enumerate}[(i)]
\item $c_i(W)$ must be even,
\item $c_i(W) \geq b +1 = \frac{n-1}{p} +1$, and
\item $c_i(W) \leq n-1$.
\end{enumerate}
\end{lemma}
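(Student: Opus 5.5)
Let $W$ be an equitable $(2t+1,2t)$-edge-coloring of $K_n$ with $n=2tb+1$. By Observation \ref{bobservation}, every vertex meets each of its colors in exactly $b$ edges, and by Observation \ref{missingone} it misses exactly one color. Fix a color $i$ and let $G_i$ be the spanning subgraph formed by the edges of color $i$. A vertex incident with color $i$ has degree $b$ in $G_i$; any other vertex has degree $0$. So the subgraph of $G_i$ induced on the $c_i(W)$ vertices that see color $i$ is $b$-regular. I will read off all three parts from this regular subgraph.

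\emph{Part (i).} By the handshake lemma, $b\,c_i(W) = 2|E(G_i)|$ is even. Since $b$ is odd, $c_i(W)$ must be even.

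\emph{Part (ii).} A vertex of color $i$ has $b$ distinct neighbours in $G_i$, and each of them also sees color $i$. Hence $c_i(W) \ge b+1 = \frac{n-1}{p}+1$.

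\emph{Part (iii).} I will count missing colors. Each vertex misses exactly one color, so
\[
\sum_{j=1}^{s} \bigl(n - c_j(W)\bigr) = n .
\]
Suppose $c_i(W) = n$ for some $i$. Then $G_i$ is a $b$-regular graph on $n$ vertices, so $bn$ must be even. But $b$ and $n = 2tb+1$ are both odd, which is a contradiction. Therefore $c_i(W) \le n-1$.

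This step does not actually need the displayed counting identity, since the parity of $bn$ alone rules out $c_i(W)=n$. The identity is useful only as a sanity check: combined with part (i) it forces every color to be missed by an odd, hence positive, number of vertices. Consistent with that, it is enough to observe that $c_i(W) \le n$ and $c_i(W)$ is even while $n$ is odd, which gives $c_i(W) \le n-1$ directly.

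No step is a real obstacle. The only point needing care is that the $b$-regularity of $G_i$ on the vertices seeing color $i$ comes from Observation \ref{bobservation}, and that observation is exactly what makes the parity argument work.
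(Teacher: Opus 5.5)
Your proof is correct. The paper does not reprove this lemma (it is quoted from \cite{MR3774803}), and your argument is the natural direct one: the color-$i$ class is a $b$-regular simple graph on the $c_i(W)$ vertices that see color $i$, with $b$ odd, which gives (i) by the handshake lemma, (ii) by simplicity, and (iii) from (i) together with $n$ being odd.

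The one thing to state explicitly is that in (ii) some vertex actually sees color $i$. This holds because an $(s,p)$-edge-coloring uses exactly $s$ colors; without it, $c_i(W)=0$ would satisfy (i) and (iii) but violate (ii).
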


\section{Necessity}

We first provide the necessary condition.

\begin{lemma}
\label{ineqlem}
    Let $W_1$ be an equitable $(s_1, p_1)$-edge-coloring of $K_{n_1}$. If we can embed $W_1$ into an equitable $(s_2, p_2)$-edge-coloring of $K_{n_2}$, $W_2$, then 
    $$
        n_1(m-1) \leq (n_2-n_1)m-1.
    $$
\end{lemma}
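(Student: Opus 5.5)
The plan is to count, for each new color, the vertices of $K_{n_2}$ it appears at, compare the contributions of old and new vertices, and then improve the resulting inequality by one using a parity argument.

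\textbf{Step 1: old vertices need at least $m-1$ new colors.} Under $W_1$ each $v\in V_O$ has degree $n_1-1=2t_1b$. By Observation~\ref{bobservation}, $v$ sees exactly $p_1=2t_1$ old colors, each exactly $b$ times, and misses one old color. In $W_2$, again by Observation~\ref{bobservation}, $v$ must see exactly $2t_2$ colors, each exactly $b$ times. So its remaining $n_2-n_1=2(t_2-t_1)b=mb$ edges, all going to $V_N$, must use exactly $m$ further colors, $b$ edges each. The only candidates are the $m$ new colors together with the single old color missing at $v$ under $W_1$. Hence $v$ uses at least $m-1$ new colors.

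\textbf{Step 2: count per new color.} For $c\in C_N$, let $a_c$ be the number of old vertices incident with color $c$, and $k_c$ the number of new vertices incident with $c$. Every edge of color $c$ lies inside $V_N$ or between $V_O$ and $V_N$. Each vertex seeing $c$ has exactly $b$ edges of color $c$, so the $a_cb$ colour-$c$ edges at old vertices all end in $V_N$. This gives $a_cb\le k_cb$, that is, $a_c\le k_c$.

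\textbf{Step 3: sum over the new colors.} Step~1 gives $\sum_{c\in C_N}a_c\ge n_1(m-1)$. Each new vertex sees at most $m$ new colors, so $\sum_{c\in C_N}k_c\le (n_2-n_1)m$. Combining these with Step~2 yields
$$n_1(m-1)\le (n_2-n_1)m.$$

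\textbf{Step 4: the parity improvement.} This is the only delicate point; I initially expected to need a structural argument ruling out the equality case, but parity settles it. Since $m=2(t_2-t_1)$ is even and $n_1=2t_1b+1$ is odd, the left side $n_1(m-1)$ is odd. The right side $(n_2-n_1)m$ is even. So equality cannot hold, and the inequality sharpens to
$$n_1(m-1)\le (n_2-n_1)m-1,$$
which is the claim.
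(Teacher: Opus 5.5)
Your proof is correct, and it differs from the paper's in how the final $-1$ is obtained. Steps 1--3 are the paper's double count of the new-colored edges $E_{C_N}(V_O,V_N)$ between $V_O$ and $V_N$, done color by color via $a_c\le k_c$ and then summed instead of in aggregate; this is equivalent.

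The paper gets the sharpening by $1$ with a case analysis. It uses Lemma~\ref{colorvectorlemma}(i): each $c_i(W_2)$ is even and $n_2$ is odd, so each new color is missing at an odd number of vertices. With $m$ even, the set $N$ of vertices missing a new color therefore has even size. The paper then splits on the parity of $|V_O\cap N|$. Either some new vertex misses a new color, or some old vertex misses an old color, and each possibility costs $b$ on one side of the count.

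You instead observe that, after dividing by $b$, the left side $n_1(m-1)$ is a product of two odd numbers. The right side $(n_2-n_1)m$ is even because $m=2(t_2-t_1)$. So equality is impossible, and the integer inequality tightens by $1$ automatically.

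Your route is shorter and self-contained: it needs neither the color-vector lemma nor the set $N$, only the standing assumptions that $n_1$ is odd and $m$ is even. The paper's case split does say which kind of slack must occur in an embedding. That extra structural information is not needed for the inequality itself, so for this lemma your parity observation is the cleaner argument.
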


\begin{proof}
    Define $E_{C_N}(V_O,V_N)$ to be the set of edges with one endpoint in $V_O$ and one endpoint in $V_N$ that are colored with a color in $C_N$.
    Recall, by Observation \ref{missingone}, that each vertex is missing exactly $1$ color.
    Let $N$ be the set of vertices in $V_O \cup V_N$ that are missing a color in $C_N$.

    Because each of the vertices in $N$ must be incident with $b$ edges of $m-1$ colors in $C_N$ and each of the vertices in $(V_O \cup V_N) \setminus N$ must be incident with $b$ edges of $m$ colors in $C_N$,
    \begin{equation}
        \label{oldvertineq}
    |E_{C_N}(V_O,V_N)|=|V_O \cap N|(m-1)b + |V_O \setminus N|mb
    \end{equation}
    and
    \begin{equation}
        \label{newvertineq}
        |E_{C_N}(V_O,V_N)| \leq |V_N \cap N|(m-1)b + |V_N \setminus N|mb.
    \end{equation}
    Note that the first equation is an equality because there are no colors from $C_N$ on the edge $\{v,u\}$ for all $v,u \in V_O$, but the second equation is an inequality because there may be colors from $C_N$ on the edge $\{v,u\}$ for some $v,u \in V_N$.
    Immediately, we can conclude from the previous two equations that $n_1(m-1)b \leq |E_{C_N}(V_O,V_N)| \leq (n_2 - n_1)mb$.
    However, this inequality can be improved.

    By Lemma \ref{colorvectorlemma} (i), $c_i(W_2)$ must be even for all $1 \leq i \leq s_2$. Also, $n_2$ is odd. So, the number of vertices missing each color $i \in C_N$ is odd.
    Additionally, $|C_N|=m$ is even.
    Therefore, $\vert N \vert$ is even.
    Consider $2$ cases.
    
    \noindent {\bf Case 1:} 
    Assume $|V_O \cap N|$ is odd.
    Therefore, $|V_N \cap N| \geq 1$.
    So, by (\ref{oldvertineq}) and (\ref{newvertineq}), $$n_1(m-1)b \leq |E_{C_N}(V_O,V_N)| \leq (1)(m-1)b+(n_2-n_1-1)mb.$$

    \noindent {\bf Case 2:} 
    Assume $\vert V_O \cap N \vert$ is even. 
    Because $\vert V_O \vert$ is odd, there must be at least one vertex in $V_O$ missing a color in $C_O$.
    Therefore, $|V_O \setminus N| \geq 1$.
     So, by (\ref{oldvertineq}) and (\ref{newvertineq}), $$(n_1-1)(m-1)b + (1)mb \leq |E_{C_N}(V_O,V_N)| \leq (n_2-n_1)mb.$$
     
     Thus, in either case, $n_1(m-1) \leq (n_2-n_1)(m)-1.$
 \end{proof}

\begin{corollary}\label{equitableembedinequality}
Let $W_1$ be an equitable $(2t_1+1, 2t_1)$-edge-coloring of $K_{n_1}$. If we can embed $W_1$ into an equitable $(2t_2+1, 2t_2)$-edge-coloring of $K_{n_2}$ called $W_2$, then $t_2 \geq 2t_1$.
\end{corollary}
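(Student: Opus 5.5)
The plan is to specialize the inequality of Lemma \ref{ineqlem} to the parameters fixed throughout the paper and argue by contradiction. By the standing assumptions, $m = s_2 - s_1 = 2(t_2 - t_1)$, $n_1 = 2t_1 b + 1$, and $n_2 - n_1 = 2(t_2-t_1)b = mb$. Substituting these into Lemma \ref{ineqlem} turns the necessary condition into
$$(2t_1 b + 1)(m-1) \leq m^2 b - 1 .$$
Since $t_1 < t_2$, we have $m \geq 2$.

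Suppose, for contradiction, that $t_2 < 2t_1$. Then $t_2 - t_1 \leq t_1 - 1$, so $2t_1 \geq 2(t_2-t_1) + 2 = m+2$. Because $m - 1 \geq 1 > 0$ and $b \geq 1$, the left-hand side is increasing in $t_1$. We may therefore replace $2t_1$ by $m+2$ to get the lower bound
$$(2t_1 b+1)(m-1) \geq ((m+2)b+1)(m-1) = (m^2+m-2)b + m - 1 .$$

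Comparing this with the right-hand side $m^2 b - 1$, the difference is $(m-2)b + m$. This is strictly positive because $m \geq 2$ and $b \geq 1$. Hence the left-hand side strictly exceeds $m^2 b - 1$, which contradicts Lemma \ref{ineqlem}. Therefore $t_2 \geq 2t_1$.

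There is no real obstacle here. The only points needing care are recording that $n_2 - n_1 = mb$ and that $m \geq 2$. The latter justifies both the monotonicity step (since $m-1>0$) and the positivity of $(m-2)b+m$.
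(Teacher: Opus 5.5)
Your proof is correct and follows the same strategy as the paper: specialize Lemma \ref{ineqlem} using $n_2-n_1=mb$, assume $t_2\le 2t_1-1$ so that $m\le 2t_1-2$, and derive a contradiction. The only difference is which variable you eliminate. The paper substitutes the upper bound $m=2t_1-2$ into the quadratic $bm^2-(2t_1b+1)m+2t_1b$. That quadratic is not monotone in $m$ on the relevant range (its vertex is at $m=t_1+\frac{1}{2b}$), so the paper's step tacitly needs a convexity/endpoint check. Your replacement of $2t_1$ by $m+2$ happens in an expression that is linear and increasing in $t_1$, so it is fully justified, and it also makes the separate observation $t_1\ge 2$ unnecessary.
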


\begin{proof}
By Lemma \ref{ineqlem}, $n_1(m-1)\leq (n_2-n_1)(m)-1$, so
$$
\begin{aligned}
0  & \leq (n_2-n_1)(m)-n_1m+n_1-1\\
  & = bm^2 -(2t_1b+1)m+2t_1b.\\
\end{aligned}
$$
Suppose for a contradiction that $t_2 \leq 2t_1-1$.
Because $t_2 > t_1$, $t_1 \geq 2$.
Recall, $m=s_2-s_1=2t_2-2t_1$.
Therefore, $m \leq 2t_1-2$.
Thus,
$$
\begin{aligned}
0  & \leq b(2t_1-2)^2 -(2t_1b+1)(2t_1-2)+2t_1b\\
& =4t_1^2b-8t_1b+4b-4t_1^2b+4t_1b-2t_1+2+2t_1b\\
& = -2t_1b+4b-2t_1+2\\
& < (-2t_1+4)(b+1).\\
\end{aligned}
$$
Because $t_1 \geq 2$ and $b \geq 1$, this is a contradiction.
Thus, $t_2 \geq 2t_1$.
\end{proof}

\begin{corollary}\label{equitableembedinequalitywithn}
Let $W_1$ be an equitable $(2t_1+1, 2t_1)$-edge-coloring of $K_{n_1}$. If we can embed $W_1$ into an equitable $(2t_2+1, 2t_2)$-edge-coloring of $K_{n_2}$ called $W_2$, then $n_2 \geq 2n_1-1$.
\end{corollary}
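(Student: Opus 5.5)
This follows directly from Corollary \ref{equitableembedinequality} by translating the bound on $t_2$ into a bound on $n_2$. Throughout the paper we have $n_1 = 2t_1b+1$ and $n_2 = 2t_2b+1$ with the same odd integer $b \geq 1$. So any inequality between $t_1$ and $t_2$ becomes an inequality between $n_1$ and $n_2$ after multiplying by $2b$ and adding $1$.

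The steps are as follows. First, from the hypothesis that $W_1$ embeds into $W_2$, Corollary \ref{equitableembedinequality} gives $t_2 \geq 2t_1$. Second, multiplying by the positive quantity $2b$ yields $2t_2b \geq 4t_1b$. Third, we rewrite both sides in terms of $n_1$ and $n_2$: since $n_2 - 1 = 2t_2b$ and $n_1 - 1 = 2t_1b$, the inequality becomes
$$n_2 - 1 \geq 2(n_1 - 1), \quad\text{that is,}\quad n_2 \geq 2n_1 - 1.$$

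There is no real obstacle here. The only point to check is that the standing assumption really uses a common $b$ for both $n_1$ and $n_2$. The summary list fixes $n_i = 2t_ib+1$ for $i = 1, 2$; the earlier text's ``$n_1 = p_2b+1$'' is a typo for $n_2$. With a common $b$, the conversion is exact.

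As an alternative check, one could bypass the corollary and apply Lemma \ref{ineqlem} directly, substituting $n_2 - n_1 = mb$. This gives the quadratic inequality $bm^2 - (2t_1b+1)m + 2t_1b \geq 0$, so $m \geq 2t_1$ and hence $n_2 - n_1 = mb \geq 2t_1b = n_1 - 1$. However, citing Corollary \ref{equitableembedinequality} is the cleanest route.
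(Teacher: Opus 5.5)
Your proof is correct and is the intended argument. The paper states this corollary without proof as an immediate consequence of Corollary~\ref{equitableembedinequality}, and substituting $n_i - 1 = 2t_ib$ into $t_2 \geq 2t_1$ gives $n_2 - 1 \geq 2(n_1 - 1)$. One caution on your side remark: the quadratic inequality $bm^2-(2t_1b+1)m+2t_1b\geq 0$ does not by itself force $m\geq 2t_1$ (for $t_1=2$, $b=3$ it holds with equality at $m=3$), so that route also needs the fact that $m=2(t_2-t_1)$ is even and positive.
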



In the following section, we show this necessary condition is also sufficient.

\section{Sufficiency}
\label{sufficiencysection}

Cruse's Theorem \cite{cruse1974embedding} for embedding symmetric latin squares proves sufficiency when $b=1$.
Therefore, we have the following lemma.

\begin{lemma}
\label{thm:b(n)=1}
If $b=1$, an equitable $(2t_1+1, 2t_1)$-edge-coloring of $K_{n_1}$
can be embedded in an equitable $(2t_2+1, 2t_2)$-edge-coloring of $K_{n_2}$ if and only if $t_1 \leq 2t_2$.
\end{lemma}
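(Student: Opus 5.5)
Throughout, I read the condition in the statement as $t_2 \geq 2t_1$, since that is what Corollary \ref{equitableembedinequality} gives. Equivalently, by Corollary \ref{equitableembedinequalitywithn}, it reads $n_2 \geq 2n_1-1$. The plan is to get the ``only if'' direction directly from Corollary \ref{equitableembedinequality}, which already holds for every odd $b$ and in particular for $b=1$. The ``if'' direction will come from translating the problem into the language of symmetric latin squares and invoking Cruse's embedding theorem \cite{cruse1974embedding}.

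\textbf{Translation.} When $b=1$ we have $n_i = 2t_i+1 = s_i$. By Observation \ref{bobservation}, each vertex meets exactly one edge of each of its $p_i = n_i - 1$ colors, so an equitable $(s_i,p_i)$-edge-coloring of $K_{n_i}$ is exactly a proper $n_i$-edge-coloring of $K_{n_i}$.

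Each color class is a matching of size at most $(n_i-1)/2$. There are $n_i$ classes and $n_i(n_i-1)/2$ edges in total, so every color class is a near-perfect matching. Hence every color is missed by exactly one vertex, and, by Observation \ref{missingone}, each vertex misses exactly one color.

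Now place on the diagonal entry $(v,v)$ the unique color missing at $v$, and on the entry $(u,v)$ the color of edge $\{u,v\}$. This gives a symmetric latin square of order $n_i$ whose diagonal is a transversal. Conversely, in any symmetric latin square of odd order each symbol appears exactly once on the diagonal, and deleting the diagonal recovers a proper $n_i$-edge-coloring of $K_{n_i}$. So the two notions correspond bijectively.

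Under this correspondence, embedding $W_1$ into $W_2$ means the following. The symmetric latin square $L_1$ of order $n_1$ on the symbols $C_O$ must sit as the top-left $n_1\times n_1$ block of a symmetric latin square $L_2$ of order $n_2$ on the symbols $C_O\cup C_N$. The diagonal of $L_1$ must be preserved, which is automatic since it is determined by the missing colors.

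\textbf{Applying Cruse.} I will quote Cruse's theorem: a symmetric $r\times r$ latin square can be embedded in a symmetric latin square of order $n$ exactly under a size condition together with a parity condition on how often each symbol occurs on the diagonal of the $r\times r$ block. For odd $n$, the parity condition requires each symbol to occur in the diagonal region consistently with appearing exactly once on the full diagonal.

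I will verify these hypotheses in our setting.
\begin{itemize}
\item Each old symbol occurs exactly once on the diagonal of $L_1$ and each new symbol occurs zero times. This matches the parity requirement for odd $n_2$: old symbols cannot reappear on the new diagonal, and the $n_2-n_1$ new symbols fill the $n_2-n_1$ new diagonal cells.
\item The size requirement, rewritten via $n_i = 2t_i+1$, should reduce to $n_2 \geq 2n_1-1$, that is, $t_2\geq 2t_1$.
\end{itemize}
Converting $L_2$ back to an edge-coloring then gives the required equitable $(2t_2+1,2t_2)$-edge-coloring of $K_{n_2}$ containing $W_1$.

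\textbf{Main obstacle.} The delicate step is matching the boundary of Cruse's size condition precisely. Cruse's bound is usually quoted as $n\geq 2r$, but $n_2$ is odd, so the extremal case here is $n_2 = 2n_1-1$. I expect the counting in the proof of Lemma \ref{ineqlem} to show that this boundary is tight and forces the diagonal structure described above. If Cruse's theorem as stated does not directly cover $n_2 = 2n_1-1$, my fallback is to build that case by hand. Each old vertex already uses every old color except its missing one. I would therefore fill the $n_1\times(n_2-n_1)$ rectangle using, in each old row, the $m = n_2-n_1-?$ new colors together with that row's missing old color. This is a Hall's-theorem/Ryser-type rectangle completion. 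Finally I would complete the new $(n_2-n_1)\times(n_2-n_1)$ block symmetrically, using a symmetric latin square argument on the new vertices.
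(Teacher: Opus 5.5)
Your overall plan is the paper's: necessity from Corollary~\ref{equitableembedinequality}, which is valid for $b=1$, and sufficiency from Cruse's theorem. You also read the statement's condition correctly as $t_2\ge 2t_1$. However, the translation step has a genuine gap, and it breaks exactly the boundary case $t_2=2t_1$, i.e.\ $n_2=2n_1-1$.

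An embedding of $W_1$ in $W_2$ does not require an old vertex $v$ to miss, in $W_2$, the same color $c_v$ it misses in $W_1$. So the diagonal of $L_1$ is not ``automatically preserved''. Worse, at the boundary it cannot be preserved. If every old vertex kept $c_v$ on its diagonal cell, each new color would have to appear in all $n_1$ old rows of the $n_1\times(n_2-n_1)$ rectangle, in distinct columns. That forces $n_2\ge 2n_1$, i.e.\ $n_2\ge 2n_1+1$. This is the $b=1$ case of the paper's later observation that for $t_2=2t_1$ the old vertices cannot all miss old colors. Already for a rainbow $K_3$ inside a proper $5$-edge-coloring of $K_5$, two of the three old vertices must switch to missing new colors.

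Accordingly, Cruse's size condition applied to your $L_1$ gives the wrong bound. That condition says every symbol occurs at least $2r-n$ times in the $r\times r$ block. New symbols occur $0$ times in your $L_1$, so you get $n_2\ge 2n_1$, not $n_2\ge 2n_1-1$; your second bullet is false. Likewise, the counting in Lemma~\ref{ineqlem} shows that at the boundary the old diagonal \emph{cannot} be kept, the opposite of what you expect. Your fallback does not repair this. Placing $c_v$ in the rectangle silently moves a new color onto the diagonal, contradicting your own setup. The symmetric completion of the new block is exactly the parity problem Cruse's theorem solves, and you leave it unargued.

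The fix is to choose the diagonal of the old block rather than inherit it. Then apply Cruse's theorem in its rectangle form: an $r\times r$ symmetric latin rectangle on $n$ symbols embeds in a symmetric latin square of order $n$ if and only if (a) each symbol occurs at least $2r-n$ times, and (b) at most $n-r$ symbols occur a number of times whose parity differs from that of $n$.
\begin{enumerate}
\item For $t_2\ge 2t_1+1$ your diagonal works. Condition (a) is vacuous, and only the $n_2-n_1$ new symbols have the wrong (even) count.
\item For $t_2=2t_1$, keep $c_{v^*}$ on the diagonal at one old vertex $v^*$. Put the $m=n_1-1$ new colors, one each, on the diagonal cells of the other old vertices; rows stay latin because new colors do not occur in the old block. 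Every symbol then occurs at least $1=2n_1-n_2$ times. The even-count symbols are exactly the $n_1-1=n_2-n_1$ old colors other than $c_{v^*}$, so (a) and (b) hold, the latter with equality.
\end{enumerate}
Deleting the diagonal of the resulting symmetric latin square of order $n_2$ gives a proper $n_2$-edge-coloring of $K_{n_2}$. For $b=1$ this is the required equitable $(2t_2+1,2t_2)$-edge-coloring containing $W_1$.
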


See \cite{Bachessay} for information on how Cruse's Theorem can be used to prove the previous lemma and an alternative proof of the $b=1$ case using similar methods to those below.
For the remainder of this section, assume $b\geq 3$.

In Section \ref{introamalgsection}, we begin by giving background information needed. In Section \ref{subsectt2=2t1} and Section \ref{subsectt2>2t1}, we prove the sufficiency in two cases, $t_2=2t_1$ and $2t_1 < t_2 < 4t_1$, respectively.
In Section \ref{Intuition}, we discuss why we consider two separate cases.
In Section \ref{mainresultsection}, the two cases are combined to prove our main result.

\subsection{The Proof Technique of Amalgamations}
\label{introamalgsection}

The \emph{amalgamation} of a graph $G$ with amalgamation function $\phi: V(G) \rightarrow V(G')$ is the graph $G'$, which can have loops and multi-edges, with the vertex set $V(G')$ and edge multiset $E(G') = \{\{\phi(x_i),\phi(x_j)\} | \{x_i,x_j\} \in E(G)\}$, where $\{\phi(x_i),\phi(x_i)\}$   denotes a loop on vertex $\phi(x_i)$. For each $x_i \in V(G')$, we call $\eta$ the number function associated with $\phi$, such that $\eta(x_i)= \vert\phi^{-1}(x_i)\vert$.
So, $\eta(x_i)$ is the number of vertices in $V(G)$ that were amalgamated to form $x_i$. 
Let $\ell(x_i)$ denote the number of loops on the vertex $x_i$ where each loop contributes two to the degree on $x_i$ and let $x \approx y$ be $\lfloor y \rfloor \leq x \leq \lceil y \rceil$.

Bahmanian and Rodger proved the following theorem.
\begin{theorem} [\cite{MR2946077}] \label{amalgamationtheorem} Let $G'$ be a $k$-edge-colored graph and let $\eta$ be a function from $V(G')$ into $\mathbb{N}$ such that for each $w \in V(G')$, $\eta(w)=1$ implies $\ell_{G'}(w)=0$.
Then there exists a loopless $\eta$-detachment $G$ of $G'$ with amalgamation function $\phi: V(G) \rightarrow V(G')$, $\eta$ being the number function associated with $\phi$, such that $G$ satisfies the following conditions:
\begin{enumerate}[(i)]
    \item $deg_{G(j)}(u) \approx deg_{G'(j)}(w) / \eta(w)$ for each $w \in V(G')$, each $u \in \phi^{-1}(w)$, and for $1 \leq j \leq k$;
    \item $m_G(u, u') \approx \ell_{G'}(w)/ {\eta(w) \choose 2}$ for each $w \in V(G')$ with $\eta(w) \geq 2$ and every pair of distinct vertices $u,u' \in \phi^{-1}(w)$; and
    \item $m_G(u,v) \approx m_{G'}(w,z) / (\eta(w) \eta(z))$ for every pair of distinct vertices $w,z \in V(G')$, each $u \in \phi^{-1}(w)$, and each $v \in \phi^{-1}(z)$.
\end{enumerate}
\end{theorem}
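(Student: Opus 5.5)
The plan is to prove Theorem \ref{amalgamationtheorem} by induction on $\sum_{w \in V(G')} (\eta(w)-1)$, splitting off one vertex at a time. If this sum is $0$, every $\eta(w)=1$. By hypothesis there are then no loops, so $G=G'$ works: each condition (i)--(iii) holds with equality, and (ii) is vacuous.

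For the inductive step, pick $w$ with $\eta(w) \ge 2$. I will form a graph $H$ from $G'$ by adding a new vertex $u$ and moving to $u$ a carefully chosen set of the edge-ends at $w$. The requirements are as follows.
\begin{itemize}
\item For each colour $j$, $deg_{H(j)}(u) \approx deg_{G'(j)}(w)/\eta(w)$.
\item For each $z \ne w$, $m_H(u,z) \approx m_{G'}(w,z)/\eta(w)$.
\item Of the $\ell_{G'}(w)$ loops, about $\ell_{G'}(w)\cdot 2/\eta(w)$ become $u$--$w$ edges, and about $\ell_{G'}(w)/\binom{\eta(w)}{2}$ of those should be the $u$--$w$ multiplicity.
\item $u$ receives no loops.
\end{itemize}
Then I apply induction to $H$ with $\eta_H(w)=\eta(w)-1$ and $\eta_H(u)=1$.

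The arithmetic that makes the induction compose is the following fact. If $x \approx y/\eta$, then $(y-x)/(\eta-1)$ still lies in $[\lfloor y/\eta\rfloor, \lceil y/\eta \rceil]$. Consequently, any further $\approx$-rounding of $(y-x)/(\eta-1)$ lands in the same interval. I will check this separately for colour degrees, for multiplicities to each $z$, and for the loop count. For loops, the remaining loops at $w$ are divided by $\binom{\eta-1}{2}$ and the new $u$--$w$ edges by $\eta-1$ at the next stage, so the count of $u$--$w$ edges has to be chosen so that both end up $\approx \ell/\binom{\eta}{2}$. Choosing how many loops to open up, and opening each loop so that it contributes one end to $u$ and keeps one at $w$, handles this.

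The combinatorial core is selecting which edge-ends at $w$ to move. I model the ends at $w$ as edges of a bipartite multigraph $B$:
\begin{itemize}
\item one side has a node for each colour $j$;
\item the other side has a node for each neighbour $z \ne w$, plus one node for the loops;
\item each edge-end at $w$ of colour $j$ toward $z$ becomes an edge $jz$ of $B$.
\end{itemize}
I need a subset $S$ of edges of $B$ whose degree at every node is $\approx$ (that node's degree)$/\eta(w)$. This follows from the classical balanced-splitting argument. Fractionally, taking every edge with weight $1/\eta(w)$ satisfies all constraints. The constraints are interval bounds on degrees in a bipartite graph, so the constraint matrix is totally unimodular (equivalently, a flow network with lower and upper arc capacities). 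Hence an integral $S$ exists.

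The main obstacle will be the loop node. A loop has two ends of the same colour at $w$, and moving exactly one end of it creates a $u$--$w$ edge, so loops must be treated as units that contribute to colour degrees in a coupled way. I will first try to represent each loop by a single edge of $B$ from its colour to the loop node, meaning that ``one end moved.'' I will impose the bound $\approx 2\ell/\eta(w)$ on the loop node, and double the appropriate colour contributions when computing targets. If the parity bookkeeping breaks total unimodularity, the fallback is to process loops first: pair the loop ends via an Euler-tour argument, splitting each colour class of loops alternately, and then run the flow argument on the non-loop edges with adjusted colour targets.
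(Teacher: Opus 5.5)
The paper gives no proof of this theorem; it quotes it from Bahmanian and Rodger \cite{MR2946077}. Your plan is the standard route for such detachment theorems, and it works. You split off one vertex $u$ with $\eta(u)=1$ at a time, and you choose the moved edge-ends by an integrality argument for degree-interval constraints in a bipartite graph; that step is an instance of Nash-Williams' lemma on two laminar families.

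The loop step you left open goes through with your first option, but the fractional witness you wrote is wrong for it. If each loop is a single edge of $B$, giving that edge weight $1/\eta(w)$ yields only $\ell_{G'}(w)/\eta(w)$ at the loop node. Instead, put weight $1/\eta(w)$ on non-loop edges and $2/\eta(w)$ on loop edges. Since $\eta(w)\ge 2$, this point lies in $[0,1]$, and it hits every target exactly:
\begin{itemize}
\item $\deg_{G'(j)}(w)/\eta(w)$ at colour node $j$, where loops count twice;
\item $m_{G'}(w,z)/\eta(w)$ at node $z$;
\item $2\ell_{G'}(w)/\eta(w)$ at the loop node.
\end{itemize}
The matrix is unchanged (bipartite incidence plus box constraints), so total unimodularity still gives an integral $0/1$ solution. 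A $0/1$ value on a loop edge means at most one end of that loop moves, so $u$ receives no loops. When $\eta(w)=2$, every loop edge has fractional value $1$, so all loops are opened; this is exactly what the hypothesis requires for $H$, where $\eta_H(w)=1$. There is no parity issue.

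You should drop the Euler-tour fallback. Fixing the loops first and then shifting the colour targets can leave a residual interval that no longer contains the natural fractional point. For example, take $\eta=4$ with one loop and one other edge of colour $j$. Opening the loop leaves the interval $[-1,0]$ for the non-loop part, while its fractional value is $1/4$. So the TU argument would not apply as you stated it.

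Two arithmetic checks also need adding beyond the fact you state.
\begin{itemize}
\item \textbf{Loops.} The $u$--$w$ multiplicity in $H$ is exactly the number $e$ of opened loops. The target $\ell_{G'}(w)/\binom{\eta(w)}{2}$ applies only after $w$ is split further. Set $c=\ell_{G'}(w)/\binom{\eta(w)}{2}$ and choose $e\approx 2\ell_{G'}(w)/\eta(w)=(\eta(w)-1)c$. Then both $e/(\eta(w)-1)$ and $(\ell_{G'}(w)-e)/\binom{\eta(w)-1}{2}$ lie in $[\lfloor c\rfloor,\lceil c\rceil]$. This uses $\binom{\eta(w)}{2}=\binom{\eta(w)-1}{2}+(\eta(w)-1)$ and $\ell-\lceil x\rceil=\lfloor \ell-x\rfloor$ for integer $\ell$.
\item \textbf{Condition (iii).} The final multiplicities are $\approx m_H(u,z)/\eta(z)$ and $\approx m_H(w,z)/((\eta(w)-1)\eta(z))$. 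So you also need $\lfloor\lfloor y/a\rfloor/b\rfloor=\lfloor y/(ab)\rfloor$ and its ceiling analogue; re-rounding the same quantity is not enough.
\end{itemize}
With these additions the induction closes.
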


In our proofs of Lemma \ref{obs:Old-color-method-theorem} (see Section \ref{subsectt2=2t1}) and Lemma \ref{obs:New-color-method-theorem} (see Section \ref{subsectt2>2t1}), we define a convenient amalgamation on $K_{n_2}$ 
and color the edges such that after detachment by the previous theorem, we have an equitable $(2t_2+1,2t_2)$-edge-coloring of $K_{n_2}$ with an arbitrary equitable $(2t_1+1,2t_1)$-edge-coloring of $K_{n_1}$ embedded.

\subsection{Sufficiency when $t_2=2t_1$}
\label{subsectt2=2t1}

We first consider the case where $t_2=2t_1$.
We have included Example \ref{oldcolormethodex} following the proof to help the reader.

Let $\lambda G$ denote the graph obtained by replacing every edge in $G$ with $\lambda$ copies of the edge.

\begin{lemma}\label{obs:Old-color-method-theorem}
Let $t_2 = 2t_1$.
An equitable $(2t_1+1, 2t_1)$-edge-coloring of $K_{n_1}$ can be embedded in an equitable $(2t_2+1, 2t_2)$-edge-coloring of $K_{n_2}$.
\end{lemma}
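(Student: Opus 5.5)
The plan is to prove Lemma \ref{obs:Old-color-method-theorem} by amalgamation: build a small edge-colored multigraph $G'$ in which the old vertices stay as they are and the new vertices are merged into a few amalgamated vertices, then apply Theorem \ref{amalgamationtheorem} to detach it into the required coloring of $K_{n_2}$.

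\textbf{Counting consequences of $t_2=2t_1$.} We have $n_2-n_1=2t_1b=n_1-1$ and $m=2t_1$. Each old vertex $v$ already carries $b$ edges of each of its $2t_1$ old colors in $W_1$. In $K_{n_2}$ it needs $b$ edges in each of $4t_1$ colors. So its $2t_1b$ edges to $V_N$ must carry exactly $b$ edges of \emph{every} new color. Consequently every old vertex is missing, in $W_2$, exactly the old color it was missing in $W_1$. Likewise each new vertex must have $b$ edges of every color but one.

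\textbf{Construction of $G'$.} Partition $V_N$ into $2t_1$ groups of size $b$, one group for each new color $k\in C_N$, and amalgamate group $k$ into a single vertex $w_k$ with $\eta(w_k)=b$. The intention is that every vertex in group $k$ will miss color $k$. Then each new color is missing at exactly $b$ vertices (odd), and each old color is missing at the same odd number of old vertices as in $W_1$. This is consistent with Lemma \ref{colorvectorlemma}(i), since $n_2$ is odd. The multigraph $G'$ consists of:
\begin{itemize}
\item $K_{n_1}$ on $V_O$, colored by $W_1$;
\item $b$ parallel edges $vw_k$ for each $v\in V_O$ and each $k$;
\item $b^2$ parallel edges $w_kw_l$ for each pair $k\neq l$;
\item $\binom{b}{2}$ loops on each $w_k$.
\end{itemize}
These multiplicities are exactly what Theorem \ref{amalgamationtheorem}(ii),(iii) needs to return a simple $K_{n_2}$. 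The coloring goal is: at each $v\in V_O$, the $2t_1b$ edges to $\{w_k\}$ carry each new color exactly $b$ times; at each $w_k$, with loops counted twice, every color other than $k$ has degree exactly $b\cdot\eta(w_k)=b^2$ and color $k$ has degree $0$. Condition (i) of Theorem \ref{amalgamationtheorem} then gives each detached vertex exactly $b$ edges of every color except $k$, since $b^2/b$ is an integer. The result is an equitable $(2t_2+1,2t_2)$-edge-coloring of $K_{n_2}$ containing $W_1$.

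\textbf{Coloring $G'$, in three steps.}
\begin{enumerate}
\item \emph{Old--new edges.} Color the bipartite multigraph between $V_O$ and $\{w_k\}$ with new colors only. Every $v$ gets each new color $b$ times, and no edge at $w_k$ gets color $k$. A natural choice is a cyclic Latin-style assignment of the colors $C_N\setminus\{k\}$ to the edges at $w_k$. The aim is to make the new-color degrees at each $w_k$ as balanced as possible, about $n_1b/(2t_1-1)$ per color, subject to the totals being $bn_1$ per color.
\item \emph{Remaining demand at each $w_k$.} The leftover degree at $w_k$ is: $b^2$ in every old color, and $b^2$ minus what step 1 supplied in each new color $\neq k$. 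All of it must be met by the edges among the $w_l$ and by the loops.
\item \emph{Edges among the $w_l$ and loops.} Choose colors on these edges so that each $w_k$ meets its demands exactly, with color $k$ absent at $w_k$. Old colors are the flexible resource here: loops on $w_k$ can absorb even amounts of any old color.
\end{enumerate}

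\textbf{Main obstacle.} The hard part is step 3 together with the balancing in step 1: it is a degree-constrained coloring of a small multigraph with loops. I would first check the global counts, namely that the total demand in each color equals twice the number of available edges of that color, and the parity of each leftover demand. Parity matters because loops contribute $2$, so any odd leftover at $w_k$ must be covered by $w_kw_l$ edges. Then I would realize the demands explicitly: decompose the $w$-level multigraph $b^2K_{2t_1}$ plus loops into near-regular pieces, for instance via a $1$-factorization of $K_{2t_1}$ with each factor replicated $b^2$ times, and assign colors to factors so that color $k$ avoids $w_k$. If exact divisibility fails, I would fall back to adjusting the group sizes (still odd, still summing to $2t_1b$) or distributing the $v$--$w_k$ edges unevenly, so that all degrees at the $w_k$ become exact multiples of $b\,\eta(w_k)$.
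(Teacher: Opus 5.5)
Your proposal has a genuine gap: the ``counting consequence'' is a false inference, and the design built on it is infeasible. An old vertex $v$ that misses $w\in C_O$ in $W_1$ is \emph{not} forced to keep missing $w$. Its $2t_1b$ edges to $V_N$ must carry $b$ edges of each of $2t_1$ colors chosen from the $2t_1+1$ colors in $C_N\cup\{w\}$. So $v$ may instead miss a new color $c$ and receive $b$ edges of $w$.

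Your choice is in fact impossible. Suppose every old vertex misses an old color. Then for each $k\in C_N$, all $n_1$ old vertices need $b$ edges of color $k$ to $V_N$, which is $n_1b=(2t_1b+1)b$ edges. But the $2t_1b=n_1-1$ new vertices can absorb at most $(n_1-1)b$ of them. In your grouping only the $(2t_1-1)b$ new vertices outside group $k$ carry color $k$ at all, so the capacity is $(2t_1-1)b^2<n_1b$.

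You can see this in your own step 1. It puts about $n_1b/(2t_1-1)=(2t_1b^2+b)/(2t_1-1)>b^2$ edges of each new color at each $w_l$, already exceeding the total $b^2$ allowed there. So the ``remaining demand'' in step 2 is negative.

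Neither fallback helps, because the obstruction is global. Let $N_O$ (resp.\ $N_N$) be the number of old (resp.\ new) vertices missing a new color. Summing the color-$k$ edges between $V_O$ and $V_N$ over $k\in C_N$ gives $(2t_1n_1-N_O)b\le(2t_1(n_1-1)-N_N)b$, i.e.\ $N_O\ge 2t_1+N_N$. Your design has $N_O=0$ and $N_N=2t_1b$. This is the paper's observation that, when $t_2=2t_1$, the old vertices cannot all miss a color in $C_O$.

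The missing idea is to let some old vertices switch to missing a new color. The paper meets the bound with equality, $N_O=2t_1$ and $N_N=0$:
\begin{itemize}
\item It amalgamates all of $V_N$ into a single vertex $y$ with $\eta(y)=2t_1b$. All detached vertices miss old color $1$, chosen with $|S_1|$ minimal.
\item It removes exactly $2t_1$ old vertices, in pairs, from the classes $S_w$ with $w\ge 2$, keeping each class odd and nonempty. This is where $b\ge 3$ is used.
\item Each removed vertex becomes the unique vertex missing one of the $2t_1$ new colors. It is joined to $y$ by $b$ edges of its former missing color $w$ and of each new color other than its own. Every other old vertex gets $b$ edges of each new color.
\item Each new color then has degree exactly $(n_1-1)b=(n_2-n_1)b$ at $y$, so no new color is needed inside $V_N$.
\item The $\binom{2t_1b}{2}$ loops at $y$ are colored only with old colors $2,\dots,2t_1+1$, with $\bigl((n_2-n_1)b-|S_k\setminus S_k'|b\bigr)/2$ loops of color $k$. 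This is an integer because vertices were removed in pairs.
\end{itemize}
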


\begin{proof}
Let $X = \{x_1, x_2, \dots x_{n_1}\}$.
We will form edge-colored graphs $G_1, G_2, G_3$ on the vertex set $X \cup \{y\}$ as follows.
Once this is done, defining $G'= \cup_{i=1}^3 G_i$, Theorem \ref{amalgamationtheorem} will be used to detach $G'$ to form $G= K_{n_2}$ with our desired equitable $(2t_2+1,2t_2)$-edge-coloring and an arbitrary $(2t_1+1, 2t_1)$-edge-coloring of $K_{n_1}$ as a subgraph.
To do so, $\phi: V(G) \rightarrow V(G')$ will be the amalgamation function with amalgamation numbers $\eta(x_i)= 1$ for $1 \leq i \leq n_1$ and $\eta(y)= n_2 - n_1$.

\begin{enumerate}[i)]
\item Let $G_1=K_{n_1}$ on the vertex set $X$ with any equitable $(2t_1+1, 2t_1)$-edge-coloring, call it $W_1$, using the colors in $C_O$ (such a coloring exists by Theorem \ref{mainlambda1}).
\item Let $S_w$ be the set of vertices in $X$ missing color $w$ in $W_1$ for $1 \leq w \leq 2t_1+1$.
Note that $\vert S_w \vert$ is odd and $\vert S_w \vert \geq 1$ by Lemma \ref{colorvectorlemma} (i) and (iii).
We will form sets $S_w'$ for $1 \leq w \leq 2t_2+1$ by modifying each set $S_w$. Without loss of generality, assume $|S_1|\leq|S_w|$ for $2\leq w\leq 2t_1+1$.
Define $S_1' \coloneqq S_1$. Remove $2t_1$ vertices from some sets $S_w$ for $2 \leq w \leq 2t_1+1$ keeping each $S_w$ odd (repeatedly remove pairs).
Call these new sets (after the vertices have been removed) $S'_w$.
This is possible because

\[
\begin{aligned}
n_1- (2t_1) &= 2t_1b+1-2t_1\\
 &= 2t_1(b-1) +1\\
 &\geq 2t_1+1 &\text{Because $b\geq 3$.}
\end{aligned}
\]
Therefore, $\vert S_w' \vert \geq 1$ for $1 \leq w \leq 2t_1+1$.
Greedily place the removed vertices in singleton sets $S'_w$ for $2t_1+1 \leq w \leq 2t_2+1 = 4t_1+1$.
Let $G_2 = (n_2-n_1)K_{n_1, 1}$ with parts $X$ and $\{y\}$ by attaching each vertex $x \in S_w \cap S'_c$ for $1 \leq w \leq 2t_1+1$ and $1 \leq c \leq 2t_2+1$ to $y$ with $b$ edges colored $k$ for  
$k \in (C_N \cup \{w\}) \setminus \{c\}$.
\item Form $G_3$ by placing $n_2-n_1 \choose 2$ loops on a single vertex, $y$.
For each $k \in C_O \setminus \{1\}$, color $\frac{(n_2-n_1)b-\vert S_k \setminus S'_k \vert b}{2}$ loops with color $k$.
This is well-defined because
\begin{equation*}
\begin{array}{rcl}
\ell_{G'}(y) & = & \sum_{k=2}^{2t_1+1} \frac{(n_2-n_1)b-\vert S_k\setminus S_{k}' \vert b}{2} \\[4pt]
& = & \sum_{k=2}^{2t_1+1} \frac{((2t_{2}b+1)-(2t_{1}b+1))b-\vert S_k\setminus S_{k}' \vert b}{2}\\[4pt]
& = & \sum_{k=2}^{2t_1+1} \frac{2t_1b^2}{2}-\sum_{k=2}^{2t_1+1} \frac{\vert S_k\setminus S_{k}' \vert b}{2}\\[4pt]
& = & \frac{
(2t_1b^2)(2t_1)}{2} - \frac{2t_1b}{2}\\[4pt]
& = & \frac{2t_1b(2t_1b-1)}{2}\\[4pt]
& = & \frac{(2t_2-2t_1)(b)((2t_2-2t_1)b-1)}{2} \\[4pt]
& = & \frac{((2t_2b+1)-(2t_1b+1))((2t_2b+1)-(2t_1b+1)-1)}{2}\\[4pt]
& = & \frac{(n_2-n_1)(n_2-n_1-1)}{2} \\[4pt]
& = & {n_2-n_1 \choose 2}. \\[4pt]
\end{array}
\end{equation*}
\end{enumerate}

As stated above, define $G'=\cup_{i=1}^3 G_i$.
We will use Theorem \ref{amalgamationtheorem} to form the detachment $G= K_{n_2}$ with our desired equitable $(2t_2+1,2t_2)$-edge-coloring and an arbitrary $(2t_1+1, 2t_1)$-edge-coloring of $K_{n_1}$ as a subgraph.
Recall that $\phi: V(G) \rightarrow V(G')$ is the amalgamation function with amalgamation numbers $\eta(x_i)= 1$ for $1 \leq i \leq n_1$ and $\eta(y)= n_2 - n_1$.

We first check that there are $b$ edges of each color incident to each vertex $v_i \in V_O$ except exactly $1$ color. Suppose $x_i \in S'_{c}$, then $v_i$ is missing color $c$.

\begin{itemize}
\item For $k \in C_O \setminus \{c\}$ and $1 \leq i \leq n_1$, by (i) or (ii), $deg_{G'(k)}(x_i) = b$.
By Theorem \ref{amalgamationtheorem} (i), $deg_{G(k)}(v_i) = deg_{G'(k)}(x_i)/\eta(x_i) = b/1 = b$ for $v_i \in \phi^{-1} (x_i)$.
\item For $k \in C_N \setminus \{c\}$ and $1 \leq i \leq n_1$, by (ii), $deg_{G'(k)}(x_i) = b$.
By Theorem \ref{amalgamationtheorem} (i), $deg_{G(k)}(v_i) = deg_{G'(k)}(x_i)/\eta(x_i) = b/1 = b$ for $v_i \in \phi^{-1} (x_i)$.
\end{itemize}

We will now check that each vertex detached from $y$ is incident to $b$ edges of each color $k\in (C_O\cup C_N) \setminus \{1\}$.
\begin{itemize}
\item For $2t_1 + 2 \leq k \leq 2t_2 + 1$, by (ii),
\[
\begin{aligned}
deg_{G'(k)}(y) &= (n_1 - 1)b \\
&= 2t_1 b^2 \\
&= (4t_1 b + 1 - (2t_1 b + 1))b \\
&= (2t_2 b + 1 - (2t_1 b + 1))b \\
&= (n_2 - n_1)b.
\end{aligned}
\]
So $deg_{G(k)}(v_i) = deg_{G'(k)}(y)/\eta(y) = (n_2-n_1)b/(n_2-n_1)=b$ for $v_i \in \phi^{-1} (y)$ by Theorem \ref{amalgamationtheorem} (i).
\item For $2 \leq k \leq 2t_1 + 1$, by (ii) and (iii),
\[
\begin{aligned}
deg_{G'(k)}(y) &= |S_k \setminus S_{k}'|b + 2 \left(\frac{(n_2 - n_1)b - (|S_k \setminus S_{k}'|b)}{2}\right) \\
&= (n_2-n_1)b.
\end{aligned}
\]
By Theorem \ref{amalgamationtheorem} (i), $deg_{G(k)}(v_i) = deg_{G'(k)}(y)/\eta(y) = (n_2-n_1)b/(n_2-n_1)=b$ for $v_i \in \phi^{-1} (y)$.
\end{itemize}

We now check that $G=K_{n_2}$, by determining there is exactly $1$ edge between each pair of vertices in $V(G)$.
\begin{itemize}
\item We first show there is exactly $1$ edge between each pair of vertices $v_i,v_j \in V_O$.
For $x_i, x_j \in X$, by (i), $m_{G'}(x_i, x_j) =1$.
By Theorem \ref{amalgamationtheorem} (iii), $m_G(v_i, v_j) = m_{G'}(x_i, x_j)/(\eta(x_i)\eta(x_j)) = 1/ ((1)(1)) = 1$ for $v_i \in \phi^{-1} (x_i)$ and $v_j \in \phi^{-1} (x_j)$.
\item We now show there is exactly $1$ edge between each pair of vertices $v_i,v_j \in V_N$.
By (iii), $\ell_{G'}(y) = {n_2-n_1 \choose 2}={\eta(y) \choose 2}$.
By Theorem \ref{amalgamationtheorem} $(ii)$, $m_G(v_i, v_j) = \ell_{G'}(y)/ {\eta(y) \choose 2}= {\eta(y) \choose 2}/ {\eta(y) \choose 2}=1$.
\item Finally, we show there is exactly $1$ edge between each pair of vertices $v_i \in V_O$ and $v_j \in V_N$.
By (ii), for each $x_i \in X$, $ m_{G'}(x_i, y)=b \vert (C_N \cup \{w\}) \setminus \{c\}\vert = b(2t_1)$ for some $1 \leq w \leq 2t_1+1$ and some $1 \leq c \leq 2t_2+1$.
So,
\[
\begin{aligned}
m_G(v_i, v_j) &= \frac{m_{G'}(x_i, y)}{(\eta(x_i)\eta(y))} \\[3pt]
&= \frac{2bt_1}{(1)(n_2-n_1)} \\[3pt]
&= \frac{2bt_1}{(2t_2b+1)-(2t_1b+1)}\\[3pt]
&= \frac{2bt_1}{(4t_1b+1)-(2t_1b+1)} \\[3pt]
&= \frac{2bt_1}{2bt_1}\\[3pt]
&= 1 
\end{aligned}
\]
for $v_i \in \phi^{-1}(x_i)$ and $v_j \in \phi^{-1}(y)$ by Theorem $\ref{amalgamationtheorem}$ (iii).
\end{itemize}
\end{proof}

The following is an example of an embedding that can be found using the method in Lemma \ref{obs:Old-color-method-theorem}.

\begin{example}
\label{oldcolormethodex}
An equitable $(3,2)$-edge-coloring of $K_7$ can be embedded in an equitable $(5,4)$-edge-coloring of $K_{13}$.
\end{example}


\begin{proof}

Let $X = \{x_1, x_2, \dots x_{7}\}$.
We will form edge-colored graphs $G_1, G_2, G_3$ on the vertex set $X \cup \{y\}$ as follows.
Once this is done, defining $G'= \cup_{i=1}^3 G_i$, Theorem \ref{amalgamationtheorem} will be used to detach $G'$ to form $G= K_{13}$ with our desired equitable $(5,4)$-edge-coloring and an arbitrary $(3,2)$-edge-coloring of $K_{7}$, $W_1$, as a subgraph (see Figure \ref{expic} (a-b)).
To do so, $\phi: V(G) \rightarrow V(G')$ will be the amalgamation function with amalgamation numbers $\eta(x_i)= 1$ for $1 \leq i \leq 7$ and $\eta(y)= 6$.
\begin{enumerate}[i)]
\item  Let $G_1$ on the vertex set $X$ be a $K_7$ with coloring $W_1$ (see Figure \ref{expic} (c)). 
\item Form $G_2=6K_{7,1}$ with parts $X$ and $\{y\}$ and color edges as described in Lemma \ref{obs:Old-color-method-theorem} (ii) (see Figure \ref{expic} (d)). 
\item Form $G_3$ by placing $15$ loops on $y$ and coloring as described in Lemma \ref{obs:Old-color-method-theorem} (iii) (see Figure \ref{expic} (e)).
\end{enumerate}
Let $G'= \cup_{i=1}^3 G_i$.
Use Theorem \ref{amalgamationtheorem} to detach vertices from $y$, creating $G=K_{13}$ with an equitable $(5,4)$-edge-coloring (See Figure \ref{expic} (b)).
\end{proof}

\begin{figure}
\centering
\begin{tabular}{ccc}
\begin{tikzpicture}
[scale=0.6, every node/.style={fill=black, circle, font=\scriptsize, very thin, scale=.4}]
\drawlabels[13]

\draw[line width=1pt, gray!30] (1) -- (8);
\draw[line width=1pt, gray!30] (1) -- (9);
\draw[line width=1pt, gray!30] (1) -- (10);
\draw[line width=1pt, gray!30] (1) -- (11);
\draw[line width=1pt, gray!30] (1) -- (12);
\draw[line width=1pt, gray!30] (1) -- (13);

\draw[line width=1pt, gray!30] (2) -- (8);
\draw[line width=1pt, gray!30] (2) -- (9);
\draw[line width=1pt, gray!30] (2) -- (10);
\draw[line width=1pt, gray!30] (2) -- (11);
\draw[line width=1pt, gray!30] (2) -- (12);
\draw[line width=1pt, gray!30] (2) -- (13);

\draw[line width=1pt, gray!30] (3) -- (8);
\draw[line width=1pt, gray!30] (3) -- (9);
\draw[line width=1pt, gray!30] (3) -- (10);
\draw[line width=1pt, gray!30] (3) -- (11);
\draw[line width=1pt, gray!30] (3) -- (12);
\draw[line width=1pt, gray!30] (3) -- (13);

\draw[line width=1pt, gray!30] (4) -- (8);
\draw[line width=1pt, gray!30] (4) -- (9);
\draw[line width=1pt, gray!30] (4) -- (10);
\draw[line width=1pt, gray!30] (4) -- (11);
\draw[line width=1pt, gray!30] (4) -- (12);
\draw[line width=1pt, gray!30] (4) -- (13);

\draw[line width=1pt, gray!30] (5) -- (8);
\draw[line width=1pt, gray!30] (5) -- (9);
\draw[line width=1pt, gray!30] (5) -- (10);
\draw[line width=1pt, gray!30] (5) -- (11);
\draw[line width=1pt, gray!30] (5) -- (12);
\draw[line width=1pt, gray!30] (5) -- (13);

\draw[line width=1pt, gray!30] (6) -- (8);
\draw[line width=1pt, gray!30] (6) -- (9);
\draw[line width=1pt, gray!30] (6) -- (10);
\draw[line width=1pt, gray!30] (6) -- (11);
\draw[line width=1pt, gray!30] (6) -- (12);
\draw[line width=1pt, gray!30] (6) -- (13);

\draw[line width=1pt, gray!30] (7) -- (8);
\draw[line width=1pt, gray!30] (7) -- (9);
\draw[line width=1pt, gray!30] (7) -- (10);
\draw[line width=1pt, gray!30] (7) -- (11);
\draw[line width=1pt, gray!30] (7) -- (12);
\draw[line width=1pt, gray!30] (7) -- (13);

\draw[line width=1pt, gray!30] (8) -- (9);
\draw[line width=1pt, gray!30] (8) -- (10);
\draw[line width=1pt, gray!30] (8) -- (11);
\draw[line width=1pt, gray!30] (8) -- (12);
\draw[line width=1pt, gray!30] (8) -- (13);

\draw[line width=1pt, gray!30] (9) -- (10);
\draw[line width=1pt, gray!30] (9) -- (11);
\draw[line width=1pt, gray!30] (9) -- (12);
\draw[line width=1pt, gray!30] (9) -- (13);

\draw[line width=1pt, gray!30] (10) -- (11);
\draw[line width=1pt, gray!30] (10) -- (12);
\draw[line width=1pt, gray!30] (10) -- (13);

\draw[line width=1pt, gray!30] (11) -- (12);
\draw[line width=1pt, gray!30] (11) -- (13);

\draw[line width=1pt, gray!30] (12) -- (13);

\draw[line width=1pt, red] (1) -- (2);
\draw[line width=1pt, red] (1) -- (3);
\draw[line width=1pt, red] (1) -- (4);
\draw[line width=1pt, blue] (1) -- (5);
\draw[line width=1pt, blue] (1) -- (6);
\draw[line width=1pt, blue] (1) -- (7);

\draw[line width=1pt, red] (2) -- (3);
\draw[line width=1pt, red] (2) -- (4);
\draw[line width=1pt, green] (2) -- (5);
\draw[line width=1pt, green] (2) -- (6);
\draw[line width=1pt, green] (2) -- (7);

\draw[line width=1pt, red] (3) -- (4);
\draw[line width=1pt, green] (3) -- (5);
\draw[line width=1pt, green] (3) -- (6);
\draw[line width=1pt, green] (3) -- (7);

\draw[line width=1pt, green] (4) -- (5);
\draw[line width=1pt, green] (4) -- (6);
\draw[line width=1pt, green] (4) -- (7);

\draw[line width=1pt, blue] (5) -- (6);
\draw[line width=1pt, blue] (5) -- (7);

\draw[line width=1pt, blue] (6) -- (7);

\begin{scope}
    \clip (-3.5,0) rectangle (3.5,3.5);
    \draw (0,0) circle(3.5);
    \draw (-3.5,0) -- (3.5,0);
\end{scope}

\end{tikzpicture} &&
\begin{tikzpicture}
[scale=0.6, every node/.style={fill=black, circle, font=\scriptsize, very thin, scale=.4}]
\drawlabels[13]
\draw[line width=1pt, psychedelicpurple] (1) -- (8);
\draw[line width=1pt, psychedelicpurple] (1) -- (9);
\draw[line width=1pt, psychedelicpurple] (1) -- (10);
\draw[line width=1pt, uclagold] (1) -- (11);
\draw[line width=1pt, uclagold] (1) -- (12);
\draw[line width=1pt, uclagold] (1) -- (13);

\draw[line width=1pt, psychedelicpurple] (2) -- (8);
\draw[line width=1pt, psychedelicpurple] (2) -- (9);
\draw[line width=1pt, psychedelicpurple] (2) -- (10);
\draw[line width=1pt, uclagold] (2) -- (11);
\draw[line width=1pt, uclagold] (2) -- (12);
\draw[line width=1pt, uclagold] (2) -- (13);

\draw[line width=1pt, uclagold] (3) -- (8);
\draw[line width=1pt, uclagold] (3) -- (9);
\draw[line width=1pt, uclagold] (3) -- (10);
\draw[line width=1pt, psychedelicpurple] (3) -- (11);
\draw[line width=1pt, psychedelicpurple] (3) -- (12);
\draw[line width=1pt, psychedelicpurple] (3) -- (13);

\draw[line width=1pt, psychedelicpurple] (4) -- (8);
\draw[line width=1pt, psychedelicpurple] (4) -- (9);
\draw[line width=1pt, psychedelicpurple] (4) -- (10);
\draw[line width=1pt, uclagold] (4) -- (11);
\draw[line width=1pt, uclagold] (4) -- (12);
\draw[line width=1pt, uclagold] (4) -- (13);

\draw[line width=1pt, uclagold] (5) -- (8);
\draw[line width=1pt, uclagold] (5) -- (9);
\draw[line width=1pt, uclagold] (5) -- (10);
\draw[line width=1pt, red] (5) -- (11);
\draw[line width=1pt, red] (5) -- (12);
\draw[line width=1pt, red] (5) -- (13);

\draw[line width=1pt, red] (6) -- (8);
\draw[line width=1pt, red] (6) -- (9);
\draw[line width=1pt, red] (6) -- (10);
\draw[line width=1pt, psychedelicpurple] (6) -- (11);
\draw[line width=1pt, psychedelicpurple] (6) -- (12);
\draw[line width=1pt, psychedelicpurple] (6) -- (13);

\draw[line width=1pt, uclagold] (7) -- (8);
\draw[line width=1pt, uclagold] (7) -- (9);
\draw[line width=1pt, uclagold] (7) -- (10);
\draw[line width=1pt, psychedelicpurple] (7) -- (11);
\draw[line width=1pt, psychedelicpurple] (7) -- (12);
\draw[line width=1pt, psychedelicpurple] (7) -- (13);

\draw[line width=1pt, red] (8) -- (9);
\draw[line width=1pt, red] (8) -- (10);
\draw[line width=1pt, blue] (8) -- (11);
\draw[line width=1pt, blue] (8) -- (12);
\draw[line width=1pt, blue] (8) -- (13);

\draw[line width=1pt, red] (9) -- (10);
\draw[line width=1pt, blue] (9) -- (11);
\draw[line width=1pt, blue] (9) -- (12);
\draw[line width=1pt, blue] (9) -- (13);

\draw[line width=1pt, blue] (10) -- (11);
\draw[line width=1pt, blue] (10) -- (12);
\draw[line width=1pt, blue] (10) -- (13);

\draw[line width=1pt, red] (11) -- (12);
\draw[line width=1pt, red] (11) -- (13);

\draw[line width=1pt, red] (12) -- (13);


\draw[line width=1pt, green] (2) -- (5);
\draw[line width=1pt, green] (2) -- (6);
\draw[line width=1pt, green] (2) -- (7);

\draw[line width=1pt, green] (3) -- (5);
\draw[line width=1pt, green] (3) -- (6);
\draw[line width=1pt, green] (3) -- (7);

\draw[line width=1.pt, green] (4) -- (5);
\draw[line width=1pt, green] (4) -- (6);
\draw[line width=1pt, green] (4) -- (7);

\draw[line width=1pt, blue] (1) -- (5);
\draw[line width=1pt, blue] (1) -- (6);
\draw[line width=1pt, blue] (1) -- (7);

\draw[line width=1pt, blue] (5) -- (6);
\draw[line width=1pt, blue] (5) -- (7);

\draw[line width=1pt, blue] (6) -- (7);

\draw[line width=1pt, red] (1) -- (2);
\draw[line width=1pt, red] (1) -- (3);
\draw[line width=1pt, red] (1) -- (4);

\draw[line width=1pt, red] (2) -- (3);
\draw[line width=1pt, red] (2) -- (4);

\draw[line width=1pt, red] (3) -- (4);

\begin{scope}
    \clip (-3.5,0) rectangle (3.5,3.5);
    \draw (0,0) circle(3.5);
    \draw (-3.5,0) -- (3.5,0);
\end{scope}
\end{tikzpicture}\\
(a) $K_7$ with $W_1$ & & (b) $G$\\
\end{tabular}
\begin{tabular}{c}
\begin{tikzpicture}[scale=0.35, every node/.style={fill=black, circle, font=\scriptsize, very thin, scale=.35}]
\clip (-5,6) rectangle + (28,-11.5);
    \drawlabels[7]

    \draw[line width=1pt, red] (1) -- (2);
    \draw[line width=1pt, red] (1) -- (3);
    \draw[line width=1pt, red] (1) -- (4);
    \draw[line width=1pt, blue] (1) -- (5);
    \draw[line width=1pt, blue] (1) -- (6);
    \draw[line width=1pt, blue] (1) -- (7);

    \draw[line width=1pt, red] (2) -- (3);
    \draw[line width=1pt, red] (2) -- (4);
    \draw[line width=1pt, green] (2) -- (5);
    \draw[line width=1pt, green] (2) -- (6);
    \draw[line width=1pt, green] (2) -- (7);

    \draw[line width=1pt, red] (3) -- (4);
    \draw[line width=1pt, green] (3) -- (5);
    \draw[line width=1pt, green] (3) -- (6);
    \draw[line width=1pt, green] (3) -- (7);

    \draw[line width=1pt, green] (4) -- (5);
    \draw[line width=1pt, green] (4) -- (6);
    \draw[line width=1pt, green] (4) -- (7);

    \draw[line width=1pt, blue] (5) -- (6);
    \draw[line width=1pt, blue] (5) -- (7);

    \draw[line width=1pt, blue] (6) -- (7);

    \node[circle,draw,minimum size=\rightvertexsize,fill=black] (8) at (\rightvertexdist,0) {};

    \draw[line width=2.25pt, bend left=1, gray!30] (1) to (8);
    \draw[line width=2.25pt, bend right=11, gray!30] (1) to (8);

    \draw[line width=2.25pt, bend left=7, gray!30] (2) to (8);
    \draw[line width=2.25pt, bend left=20, gray!30] (2) to (8);

    \draw[line width=2.25pt, bend left=50, gray!30] (3) to (8);
    \draw[line width=2.25pt, bend left=65, gray!30] (3) to (8);

    \draw[line width=2.25pt, bend right=95, gray!30] (4) to (8);
    \draw[line width=2.25pt, out=-110, in=-70, distance=6.2cm, gray!30] (4) to (8);

    \draw[line width=2.25pt, bend right=32, gray!30] (5) to (8);
    \draw[line width=2.25pt, bend right=42, gray!30] (5) to (8);

    \draw[line width=2.25pt, bend right=5, gray!30] (6) to (8);
    \draw[line width=2.25pt, bend right=15, gray!30] (6) to (8);

    \draw[line width=2.25pt, bend left=5, gray!30] (7) to (8);
    \draw[line width=2.25pt, bend right=10, gray!30] (7) to (8);

    \draw[line width=2.25pt, gray!30] (8) to[out=-22, in=22, looseness=15] (8);
    \draw[line width=2.25pt, gray!30] (8) to[out=-35, in=35, looseness=17] (8);
    \draw[line width=2.25pt, gray!30] (8) to[out=-42, in=42, looseness=18] (8);
    \draw[line width=2.25pt, gray!30] (8) to[out=-50, in=50, looseness=21] (8);
    \draw[line width=2.25pt, gray!30] (8) to[out=-57, in=57, looseness=25] (8);

    \begin{scope}[shift={(\rightvertexdist--7,0)}, every node/.style={fill=none, rectangle, font=\scriptsize}]
    \draw[thin, black] (0.8,0.55) rectangle (6,-1.25); 
    \draw[line width=1pt, black] (1,0) -- (2,0) node[right] {= 1 edge};
    \draw[line width=2.25pt, black] (1,-0.75) -- (2,-0.75) node[right] {= 3 edges};
\end{scope}
\end{tikzpicture}\\
(c) $G_1$\\
\begin{tikzpicture}[scale=0.35, every node/.style={fill=black, circle, font=\scriptsize, very thin, scale=.35}]
\clip (-5,6) rectangle + (28,-11.5);
    \drawlabels[7]

    \draw[line width=1pt, red] (1) -- (2);
    \draw[line width=1pt, red] (1) -- (3);
    \draw[line width=1pt, red] (1) -- (4);
    \draw[line width=1pt, blue] (1) -- (5);
    \draw[line width=1pt, blue] (1) -- (6);
    \draw[line width=1pt, blue] (1) -- (7);

    \draw[line width=1pt, red] (2) -- (3);
    \draw[line width=1pt, red] (2) -- (4);
    \draw[line width=1pt, green] (2) -- (5);
    \draw[line width=1pt, green] (2) -- (6);
    \draw[line width=1pt, green] (2) -- (7);

    \draw[line width=1pt, red] (3) -- (4);
    \draw[line width=1pt, green] (3) -- (5);
    \draw[line width=1pt, green] (3) -- (6);
    \draw[line width=1pt, green] (3) -- (7);

    \draw[line width=1pt, green] (4) -- (5);
    \draw[line width=1pt, green] (4) -- (6);
    \draw[line width=1pt, green] (4) -- (7);

    \draw[line width=1pt, blue] (5) -- (6);
    \draw[line width=1pt, blue] (5) -- (7);

    \draw[line width=1pt, blue] (6) -- (7);

    \node[circle,draw,minimum size=\rightvertexsize,fill=black] (8) at (\rightvertexdist,0) {};

    \draw[line width=2.25pt, bend left=1, psychedelicpurple] (1) to (8);
    \draw[line width=2.25pt, bend right=11, uclagold] (1) to (8);

    \draw[line width=2.25pt, bend left=7, uclagold] (2) to (8);
    \draw[line width=2.25pt, bend left=20, psychedelicpurple] (2) to (8);

    \draw[line width=2.25pt, bend left=50, psychedelicpurple!80] (3) to (8);
    \draw[line width=2.25pt, bend left=65, uclagold] (3) to (8);

    \draw[line width=2.25pt, bend right=95, psychedelicpurple] (4) to (8);
    \draw[line width=2.25pt, out=-110, in=-70, distance=6.2cm, uclagold] (4) to (8);

    \draw[line width=2.25pt, bend right=32, psychedelicpurple] (5) to (8);
    \draw[line width=2.25pt, bend right=42, uclagold] (5) to (8);

    \draw[line width=2.25pt, bend right=5, red] (6) to (8);
    \draw[line width=2.25pt, bend right=15, uclagold] (6) to (8);

    \draw[line width=2.25pt, bend left=5, psychedelicpurple] (7) to (8);
    \draw[line width=2.25pt, bend right=10, red] (7) to (8);

    \draw[line width=2.25pt, gray!30] (8) to[out=-22, in=22, looseness=15] (8);
    \draw[line width=2.25pt, gray!30] (8) to[out=-35, in=35, looseness=17] (8);
    \draw[line width=2.25pt, gray!30] (8) to[out=-42, in=42, looseness=18] (8);
    \draw[line width=2.25pt, gray!30] (8) to[out=-50, in=50, looseness=21] (8);
    \draw[line width=2.25pt, gray!30] (8) to[out=-57, in=57, looseness=25] (8);

    \begin{scope}[shift={(\rightvertexdist--7,0)}, every node/.style={fill=none, rectangle, font=\scriptsize}]
    \draw[thin, black] (0.8,0.55) rectangle (6,-1.25); 
    \draw[line width=1pt, black] (1,0) -- (2,0) node[right] {= 1 edge};
    \draw[line width=2.25pt, black] (1,-0.75) -- (2,-0.75) node[right] {= 3 edges};
\end{scope}
\end{tikzpicture}\\
\\
(d) $G_1 \cup G_2$\\
\begin{tikzpicture}[scale=0.35, every node/.style={fill=black, circle, font=\scriptsize, very thin, scale=.35}]
\clip (-5,6) rectangle + (28,-11.5);
    \drawlabels[7]

    \draw[line width=1pt, red] (1) -- (2);
    \draw[line width=1pt, red] (1) -- (3);
    \draw[line width=1pt, red] (1) -- (4);
    \draw[line width=1pt, blue] (1) -- (5);
    \draw[line width=1pt, blue] (1) -- (6);
    \draw[line width=1pt, blue] (1) -- (7);

    \draw[line width=1pt, red] (2) -- (3);
    \draw[line width=1pt, red] (2) -- (4);
    \draw[line width=1pt, green] (2) -- (5);
    \draw[line width=1pt, green] (2) -- (6);
    \draw[line width=1pt, green] (2) -- (7);

    \draw[line width=1pt, red] (3) -- (4);
    \draw[line width=1pt, green] (3) -- (5);
    \draw[line width=1pt, green] (3) -- (6);
    \draw[line width=1pt, green] (3) -- (7);

    \draw[line width=1pt, green] (4) -- (5);
    \draw[line width=1pt, green] (4) -- (6);
    \draw[line width=1pt, green] (4) -- (7);

    \draw[line width=1pt, blue] (5) -- (6);
    \draw[line width=1pt, blue] (5) -- (7);

    \draw[line width=1pt, blue] (6) -- (7);

    \node[circle,draw,minimum size=\rightvertexsize,fill=black] (8) at (\rightvertexdist,0) {};

    \draw[line width=2.25pt, bend left=1, psychedelicpurple] (1) to (8);
    \draw[line width=2.25pt, bend right=11, uclagold] (1) to (8);

    \draw[line width=2.25pt, bend left=7, uclagold] (2) to (8);
    \draw[line width=2.25pt, bend left=20, psychedelicpurple] (2) to (8);

    \draw[line width=2.25pt, bend left=50, psychedelicpurple!80] (3) to (8);
    \draw[line width=2.25pt, bend left=65, uclagold] (3) to (8);

    \draw[line width=2.25pt, bend right=95, psychedelicpurple] (4) to (8);
    \draw[line width=2.25pt, out=-110, in=-70, distance=6.2cm, uclagold] (4) to (8);

    \draw[line width=2.25pt, bend right=32, psychedelicpurple] (5) to (8);
    \draw[line width=2.25pt, bend right=42, uclagold] (5) to (8);

    \draw[line width=2.25pt, bend right=5, red] (6) to (8);
    \draw[line width=2.25pt, bend right=15, uclagold] (6) to (8);

    \draw[line width=2.25pt, bend left=5, psychedelicpurple] (7) to (8);
    \draw[line width=2.25pt, bend right=10, red] (7) to (8);

    \draw[line width=2.25pt, red] (8) to[out=-22, in=22, looseness=15] (8);
    \draw[line width=2.25pt, red] (8) to[out=-35, in=35, looseness=17] (8);
    \draw[line width=2.25pt, blue] (8) to[out=-42, in=42, looseness=18] (8);
    \draw[line width=2.25pt, blue] (8) to[out=-50, in=50, looseness=21] (8);
    \draw[line width=2.25pt, blue] (8) to[out=-57, in=57, looseness=25] (8);

    \begin{scope}[shift={(\rightvertexdist--7,0)}, every node/.style={fill=none, rectangle, font=\scriptsize}]
    \draw[thin, black] (0.8,0.55) rectangle (6,-1.25); 
    \draw[line width=1pt, black] (1,0) -- (2,0) node[right] {= 1 edge};
    \draw[line width=2.25pt, black] (1,-0.75) -- (2,-0.75) node[right] {= 3 edges};
\end{scope}
\end{tikzpicture}\\
(e) $G'=G_1 \cup G_2 \cup G_3$\\
\end{tabular}
\caption{An embedding of an equitable $(3,2)$-edge-coloring of $K_7$ in an equitable $(5,4)$-edge-coloring of $K_{13}$.}
\label{expic}
\end{figure}

\subsection{Sufficiency when $2t_1 < t_2 < 4t_1$}
\label{subsectt2>2t1}

We begin with a few results that will be used throughout this section.

The following two lemmas can be proven with a Walecki-type construction.
A \emph{decomposition} of a graph $G$ into copies of $H$ is a partition of the edges of $G$ into copies of $H$. 

\begin{lemma} \cite{mine} \label{Kdecomp}
$2K_n$ can be decomposed into $n$ cycles of length $n-1$.
\end{lemma}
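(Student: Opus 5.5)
The plan is a cyclic (difference-method) construction in the Walecki spirit. First, a count shows the target is consistent: $2K_n$ has $n(n-1)$ edges, and $n$ cycles of length $n-1$ also have $n(n-1)$ edges in total. So it suffices to produce $n$ edge-disjoint cycles of length $n-1$ in $2K_n$.

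Take the vertex set to be $\mathbb{Z}_n$. I would build one \emph{base cycle} $C_0$ of length $n-1$ on the vertex set $\mathbb{Z}_n\setminus\{0\}$, and let $C_i=C_0+i$ for $i\in\mathbb{Z}_n$. Then $C_i$ is a cycle of length $n-1$ that misses exactly the vertex $i$. Write $d(\{u,v\})=\min(|u-v|,n-|u-v|)$ for the difference of an edge. Under the translation action of $\mathbb{Z}_n$:
\begin{itemize}
\item for $d<n/2$, one base edge of difference $d$ generates each edge of difference $d$ exactly once;
\item for $n$ even, one base edge of difference $n/2$ generates each such edge exactly twice.
\end{itemize}
Hence $\{C_i\}$ decomposes $2K_n$ if and only if $C_0$ contains each difference $d<n/2$ exactly twice and, when $n$ is even, the difference $n/2$ exactly once. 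The edge count agrees with this. For $n$ odd there are $(n-1)/2$ differences, each used twice, giving $n-1$ edges. For $n$ even there are $(n-2)/2$ differences used twice plus one edge of difference $n/2$, again giving $n-1$ edges.

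The remaining task is to exhibit such a $C_0$ on the labels $1,\dots,n-1$. I would start from the zigzag Hamilton path on $\{1,\dots,n-1\}$, for example $m,m+1,m-1,m+2,m-2,\dots$ with $m$ near $n/2$. Its consecutive integer gaps are exactly $1,2,\dots,n-2$. A gap $L$ yields difference $\min(L,n-L)$, so the gaps $L$ and $n-L$ both lie in $\{2,\dots,n-2\}$ and give every difference $2\le d<n/2$ twice (and $n/2$ once when $n$ is even). Difference $1$ appears only once, since the gap $n-1$ is absent. The closing edge of the path must therefore supply the second difference $1$.

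This closing step is the main obstacle. The plain zigzag closes with an edge of difference about $n/2$, not $1$; for example, for $n=7$ the path $3,4,2,5,1,6$ closes with $6$--$3$. My first attempt would be to relabel the zigzag by a reflection $x\mapsto n-x$, or a shift within the nonzero residues, or to start the zigzag at a different point, so that its two endpoints become consecutive residues while the multiset of gaps stays $\{1,\dots,n-2\}$ up to replacing $L$ by $n-L$. If no single zigzag works uniformly, I would split into the cases $n$ odd and $n$ even, and into residues of $n$ modulo $4$ if necessary, and give an explicit base cycle in each case. Such a cycle is essentially a ``graceful-type'' ordering of $\mathbb{Z}_n^*$. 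I would check small cases ($n=4,5,6,7$) by hand to guide the general pattern before verifying the difference condition in general.
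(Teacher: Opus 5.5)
Your reduction is correct: a base cycle $C_0$ on $\mathbb{Z}_n\setminus\{0\}$ develops into the required decomposition exactly when it contains each difference $d<n/2$ twice and, for even $n$, the difference $n/2$ once. But the existence of $C_0$ is the entire content of the lemma, and you leave it open. More seriously, $C_0$ does not exist when $n\equiv 2\pmod 4$, so no choice of zigzag and no case split can complete the plan.

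For even $n$, parity is well defined on $\mathbb{Z}_n$, and an edge joins vertices of opposite parity exactly when its difference is odd. A closed cycle changes parity an even number of times, so it contains an even number of odd-difference edges. Your $C_0$ would contain the odd differences $d<n/2$ twice each plus the difference $n/2$ once. When $n\equiv 2\pmod 4$, $n/2$ is odd, so the total is odd, which is a contradiction. For $n=6$ you can check directly that no $5$-cycle on $\{1,\dots,5\}$ has differences $\{1,1,2,2,3\}$.

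Using several base cycles does not help. An $(n-1)$-cycle fixed by a subgroup of order $k>1$ would have a vertex set that is a union of cosets, forcing $k\mid n-1$ and $k\mid n$. So every $\mathbb{Z}_n$-invariant decomposition comes from a single base cycle. Your proposed reflection $x\mapsto n-x$ is also useless, since it preserves every difference, including that of the closing edge. Even where $C_0$ can exist, e.g.\ $(1,2,6,4,3,5)$ for $n=7$, you have not supplied a general construction.

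The paper gives no proof of this lemma. It only cites \cite{mine} and says a Walecki-type construction works, and Walecki's device of an extra fixed point is exactly what removes your closing problem. Work on $\mathbb{Z}_{n-1}\cup\{\infty\}$ with $n\ge 4$; for $n=3$, $2K_3$ is three digons.

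Let $E=(0,1,\dots,n-2)$. This cycle is translation-invariant and covers each difference-$1$ edge once. Let $P$ be your zigzag Hamilton path, now on $\{1,\dots,n-2\}$. Its integer gaps are $1,\dots,n-3$, so in $\mathbb{Z}_{n-1}$ it has:
difference $1$ once, since the gap $n-2$ is absent;
each difference $2\le d<(n-1)/2$ twice, from the gaps $d$ and $n-1-d$;
and, when $n$ is odd, the difference $(n-1)/2$ once.

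Join both ends of $P$ to $\infty$ to get a cycle $D$ of length $n-1$ avoiding $0$. The $n-1$ translates of $D$ cover every edge at $\infty$ twice, once from each end of $P$. They also cover every edge inside $\mathbb{Z}_{n-1}$ twice, except that difference-$1$ edges are covered only once, and $E$ supplies those second copies.

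So $E$ together with the translates of $D$ decomposes $2K_n$ into $n$ cycles of length $n-1$, for all $n\ge 4$. The edge you could not close is routed through $\infty$, and the missing copy of difference $1$ becomes a separate invariant cycle.
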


A \emph{$k$-factor} of a graph $G$ is a spanning subgraph of $G$ in which every vertex has degree $k$.
Note that a $1$-factor is a \emph{perfect matching}.
A \emph{$k$-factorization} is a partition of the edges of $G$ into disjoint $k$-factors.
The following is a well-known result that is a special case of Baranyai's Theorem \cite{baranyai1974factrization} and was known in the 19th century.

\begin{lemma} \label{1factKn}
If $n$ is even, then there is a $1$-factorization of $K_n$.
\end{lemma}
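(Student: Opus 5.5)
The plan is to prove Lemma \ref{1factKn} with the classical ``rotation'' (round-robin) construction, which is the same Walecki-type idea behind Lemma \ref{Kdecomp}. Write $n=2r$. Label the vertices of $K_n$ as $\infty$ and the elements of $\mathbb{Z}_{2r-1}$. For each $i \in \mathbb{Z}_{2r-1}$ define
$$
F_i \coloneqq \bigl\{\{\infty,i\}\bigr\} \cup \bigl\{\{i-j,\,i+j\} : 1 \leq j \leq r-1\bigr\}.
$$
The claim is that $F_0, F_1, \dots, F_{2r-2}$ is a $1$-factorization of $K_n$.

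First I check that each $F_i$ is a perfect matching. The vertex $\infty$ is covered by $\{\infty,i\}$. The finite vertices $i\pm j$ for $1\le j\le r-1$, together with $i$, are $2r-1$ residues. They are pairwise distinct: $i+j \equiv i-j'$ would force $j+j' \equiv 0 \pmod{2r-1}$, which is impossible since $2 \leq j+j' \leq 2r-2$; and $i+j\equiv i+j'$ forces $j=j'$. So every vertex is covered exactly once, and $F_i$ has $r$ edges.

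Next I check that the factors partition $E(K_n)$. There are $2r-1$ factors with $r$ edges each, giving $r(2r-1)=\binom{n}{2}$ edges in total, so it suffices to show every edge lies in some $F_i$.
\begin{enumerate}[(a)]
\item An edge $\{\infty,a\}$ lies in $F_a$.
\item For an edge $\{a,c\}$ with $a\neq c$ finite, we need $i$ with $\{i-j,i+j\}=\{a,c\}$. This amounts to solving $2i \equiv a+c \pmod{2r-1}$. Since $2r-1$ is odd, $2$ is invertible, so $i$ exists.
\item For that $i$, we have $c-i \equiv -(a-i)$ and $a\neq i$, so one of $\pm(a-i)$ has a representative $j$ with $1\le j\le r-1$. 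Hence $\{a,c\}\in F_i$.
\end{enumerate}

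The only potential obstacle is the bookkeeping in step (c): the existence and uniqueness of $i$ relies on $2r-1$ being odd. With that observation the proof is routine. Alternatively, since the paper calls the result well known and a special case of Baranyai's Theorem \cite{baranyai1974factrization}, one could simply cite it, but the explicit construction above is self-contained.
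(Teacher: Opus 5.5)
Your round-robin construction is correct and complete. Each $F_i$ covers $\infty$ and the $2r-1$ distinct residues $i, i\pm 1,\dots,i\pm(r-1)$ exactly once. The count $r(2r-1)=\binom{2r}{2}$ together with the covering argument (a)--(c) then forces every edge into exactly one $F_i$.

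The paper does not prove this lemma at all. It only cites it as a classical fact: a special case of Baranyai's Theorem, known since the 19th century. So your route differs from the paper's in that you give a self-contained elementary proof where the paper relies on citation. The citation buys brevity and a link to a far more general result about factorizations of complete $k$-uniform hypergraphs. That result needs an integral-flow argument, which is much heavier than the case $k=2$ requires. Your construction gives explicit cyclic factors and uses only that $2$ is invertible modulo the odd number $2r-1$.

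Two small points to tidy:
\begin{itemize}
\item In (c) you assert $a\neq i$ without justification. It holds because $a\equiv i$ would give $c\equiv 2i-a\equiv a$.
\item Uniqueness of $i$ is not needed, since the edge count already ensures no edge is covered twice. The checks you skip in the matching step, $i\pm j\not\equiv i$ and $i-j\not\equiv i-j'$ for $j\neq j'$, are equally immediate.
\end{itemize}
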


The following theorem is attributed to Petersen and can be proven using Euler circuits and $1$-factorizations of bipartite graphs.

\begin{theorem} \label{petersen}
Every $2k$-regular graph has a $2$-factorization (into $k$ edge-disjoint $2$-factors).
\end{theorem}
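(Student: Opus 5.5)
Petersen's theorem: every $2k$-regular graph $H$ has a $2$-factorization. I would prove it in three steps: split each component of $H$ along an Euler circuit, turn the result into a $k$-regular bipartite graph, and $1$-factor that graph.

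\emph{Euler orientation.} It suffices to treat each connected component of $H$ separately and take unions of the resulting $2$-factors. A connected graph in which every vertex has even degree $2k$ has an Euler circuit. Traversing this circuit orients every edge. Each vertex is entered and left exactly $k$ times along the circuit, so every vertex has in-degree $k$ and out-degree $k$. Loops and multiple edges cause no problem here; a loop contributes one entry and one exit.

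\emph{Bipartite split and $1$-factorization.} Form the bipartite graph $B$ with parts $V^{+}=\{v^{+} : v \in V(H)\}$ and $V^{-}=\{v^{-} : v\in V(H)\}$. For each oriented edge $u\to v$, put an edge $u^{+}v^{-}$ in $B$. Then $B$ is $k$-regular bipartite. By K\H{o}nig's theorem, a regular bipartite graph has a perfect matching; this follows from Hall's condition, since for $S\subseteq V^{+}$ the $k|S|$ edges leaving $S$ land in $N(S)$, which can absorb at most $k|N(S)|$ edges. Removing the matching leaves a $(k-1)$-regular bipartite graph, so induction gives a decomposition of $B$ into $k$ perfect matchings $M_1,\dots,M_k$.

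\emph{Pulling back.} Each $M_i$ corresponds to a set $F_i$ of edges of $H$ in which every vertex $v$ has exactly one out-edge (from $v^{+}$) and one in-edge (from $v^{-}$). So $v$ has degree exactly $2$ in $F_i$, where a loop counts as $2$, and $F_i$ is a $2$-factor of $H$. The $F_i$ partition $E(H)$ because the $M_i$ partition $E(B)$.

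The only real obstacle is the bipartite $1$-factorization step. Once Hall's condition is checked for regular bipartite graphs, everything else is bookkeeping.
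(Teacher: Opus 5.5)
Your proof is correct and follows the route the paper indicates: the paper gives no proof and only remarks that the result follows from Euler circuits and $1$-factorizations of bipartite graphs, which is exactly your orient-split-match-pull back argument. Your explicit handling of loops and parallel edges is worthwhile, because the paper applies the theorem to $G_6$, which has both.
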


We now consider the case where  $2t_1 < t_2 < 4t_1$, and will again use the proof technique of amalgamations.
Although we believe that Lemma \ref{obs:New-color-method-theorem} can be modified to remove the condition that $t_2<4t_1$, this significantly simplified the proof and is all that is needed in Theorem \ref{mainthm}.

\begin{lemma} \label{obs:New-color-method-theorem}
Let $2t_1 <t_2 < 4t_1$.
An equitable $(2t_1+1, 2t_1)$-edge-coloring of $K_{n_1}$ can be embedded in an equitable $(2t_2+1, 2t_2)$-edge-coloring of $K_{n_2}$.
\end{lemma}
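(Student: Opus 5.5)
The plan is to reuse the amalgamation method of Lemma~\ref{obs:Old-color-method-theorem}, this time letting the new colors carry the extra degree. Keep $X=\{x_1,\dots,x_{n_1}\}$ with $\eta(x_i)=1$ and $G_1=K_{n_1}$ coloured by an arbitrary $W_1$. Replace the single vertex $y$ by a small set of amalgamated vertices $y_c$, one for each colour $c$ that some new vertex will miss, with $\eta(y_c)=a_c$, where $a_c$ is the number of vertices of $V_N$ that will miss colour $c$. Then $\sum_c a_c=n_2-n_1=mb$. Splitting $y$ this way is needed because Theorem~\ref{amalgamationtheorem}(i) only gives near-uniform degrees inside one amalgamated vertex. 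Vertices detached from the same $y_c$ must all miss the same colour $c$ and have exactly $b$ edges of every other colour, so each $y_c$ must have $\deg_{G'(k)}(y_c)=a_cb$ for $k\neq c$ and $0$ for $k=c$.

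\textbf{Step 1 (edges from $X$).} Every old vertex keeps the old colour it misses in $W_1$. It is joined to each $y_c$ by $a_c$ edges, $mb$ edges in total, carrying $b$ edges of each of the $m$ new colours; no old colours are used on these edges. Since $t_2>2t_1$ we have $mb=2(t_2-t_1)b\geq n_1+1$, which leaves room for the next step.

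\textbf{Step 2 (parity bookkeeping).} We need to choose the $a_c$. By Lemma~\ref{colorvectorlemma}(i) every colour is missed by an odd number of vertices of $K_{n_2}$. Each $|S_w|$ is already odd, so $a_w$ must be even for $w\in C_O$ and $a_k$ odd for $k\in C_N$. For colour $k$ the edges of colour $k$ inside $V_N$ must number $\bigl(\sum_{c\neq k}a_c b-n_1b\bigr)/2$ for $k\in C_N$, and $\sum_{c\neq w}a_c b/2$ for $w\in C_O$. These must be nonnegative integers, which the parities above guarantee. The plan is to take every $a_k$ with $k\in C_N$ small and odd (for example $1$ or $3$), and to spread the remaining even amount $mb-\sum_{k\in C_N}a_k$ over the old colours in even parts.

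\textbf{Step 3 (edges inside $V_N$), the main obstacle.} We must build the multigraph on $\{y_c\}$, with $a_ca_d$ edges between $y_c$ and $y_d$ and $\binom{a_c}{2}$ loops on $y_c$, and colour it with the prescribed degree of each colour at each $y_c$. Colour $c$ must never appear at $y_c$, and every colour class needs even loop contributions. Counting total degree shows the numbers are globally consistent, because a new vertex has degree $n_2-1=2t_2b$. The real work is realising the colouring, and this is where Lemmas~\ref{Kdecomp} and~\ref{1factKn} and Theorem~\ref{petersen} would enter. I would first try a Walecki-type pattern: decompose $2K$ on the groups into Hamilton-like cycles via Lemma~\ref{Kdecomp}, or $1$-factorise via Lemma~\ref{1factKn}. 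Each colour $c$ would be given a structured subgraph avoiding $y_c$, and the residual regular multigraph would be split with Theorem~\ref{petersen} to distribute the remaining colour classes evenly.

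I expect the hypothesis $t_2<4t_1$, that is $m<6t_1$, to be used here. It keeps the new colours few compared with the old colours, so the excess $mb-\sum_k a_k$ can be absorbed by even $a_w$ and the required degrees of colour $k$ at each $y_c$ do not exceed the edges available. Once $G'$ is built, the final step copies the checks of Lemma~\ref{obs:Old-color-method-theorem}. Theorem~\ref{amalgamationtheorem}(i) gives exactly $b$ edges of each non-missing colour at every vertex, and parts (ii) and (iii) give multiplicity $1$ between every pair of vertices, so $G=K_{n_2}$ contains $W_1$ and carries an equitable $(2t_2+1,2t_2)$-edge-colouring.
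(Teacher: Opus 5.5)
Your framework is sound. Grouping $V_N$ by missing colour into amalgamated vertices, keeping $\eta(x_i)=1$, and letting each old vertex keep its $W_1$-missing colour while drawing all $m$ new colours from $V_N$ are all correct, and so are your parity conditions ($a_k$ odd for $k\in C_N$, $a_w$ even for $w\in C_O$). But there is a genuine gap. Step~3, which you yourself flag as the main obstacle, is never carried out. Step~1 is also left unspecified: you do not say which new colours go from $x_i$ to which $y_c$ so that colour $c$ never reaches $y_c$, nor what colour-$k$ degree each group then receives from $X$.

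These constructions are the entire content of the lemma. Consistent global degree counts and congruences do not by themselves produce a colouring of the amalgamated multigraph with multiplicities $a_ca_d$, $\binom{a_c}{2}$ loops, and the prescribed colour degrees. For instance, every group with $a_c$ odd needs odd degree $a_cb$ in each colour it does not miss. So Theorem~\ref{petersen}, which only supplies even-degree $2$-factors, cannot finish the job without explicit odd-degree pieces, and you do not give any. Your explanation of $t_2<4t_1$ is also off: spreading the even excess $mb-\sum_k a_k$ over the $a_w$ needs no hypothesis at all.

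The idea that makes the paper's proof work is the symmetric special case of your own framework: $a_k=b$ for every $k\in C_N$ and $a_w=0$ for every $w\in C_O$. This gives $m$ groups $y_0,\dots,y_{m-1}$ with $\eta(y_p)=b$, where $y_p$ misses $s_1+1+p$, so every group needs exactly $b^2$ of each colour it does not miss. The construction then proceeds as follows.
\begin{enumerate}
\item $G_2=bK_{n_1,m}$ is coloured cyclically, so colour $s_1+1+p$ reaches each $y_s\neq y_p$ either $a$ or $a+1$ times, where $n_1b=a(m-1)+r$.
\item $G_3$ (a matching avoiding $y_p$, with doubled edges and one loop when $a$ is odd) levels this to $a+1$, respectively $a+2$.
\item $(b^2-(a+1))K_m$, respectively $(b^2-(a+2))K_m$, is split by Lemma~\ref{Kdecomp} into $(m-1)$-cycles each avoiding one vertex; the cycles avoiding $y_p$ get colour $s_1+1+p$.
\item One $1$-factor of $K_m$ per old colour fixes the odd parity of $b^2$; this is possible because $m-1\geq 2t_1+1$.
\item The $(2t_1+1)(b^2-1)$-regular remainder is $2$-factorized by Theorem~\ref{petersen}.
\end{enumerate}
The hypothesis $t_2<4t_1$ is used only to get $a>2$. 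That ensures every pair $y_p,y_s$ keeps nonnegative multiplicity after $G_3,G_4,G_5$ are removed from $b^2K_m$.

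Your non-uniform choice (small odd $a_k$, even $a_w>0$) produces groups of different sizes with opposite parity requirements. That breaks exactly the cyclic and Walecki-type symmetry this construction relies on, so you would need a different construction, which the proposal does not supply.
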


\begin{proof}
Recall that $m= \vert C_N \vert = 2t_2-2t_1$.
Let $X = \{x_1, x_2, \dots x_{n_1}\}$ and $Y= \{y_{0}, y_{1}, \dots y_{m-1}\}$.
We will form edge-colored graphs $G_1, G_2, \dots G_6$ on the vertex set $X \cup Y$ as follows.
Once this is done defining $G'= \cup_{i=1}^6 G_i$, Theorem \ref{amalgamationtheorem} will be used to detach $G'$ to form $G= K_{n_2}$ with our desired equitable $(2t_2+1,2t_2)$-edge-coloring and an arbitrary $(2t_1+1, 2t_1)$-edge-coloring of $K_{n_1}$ as a subgraph.
To do so, $\phi: V(G) \rightarrow V(G')$ will be the amalgamation function with amalgamation numbers $\eta(x_i)= 1$ for $1 \leq i \leq n_1$ and $\eta(y_p)= b$ for $ 0 \leq p \leq  m-1$.

We consider $2$ cases based on the parity of $a$ in the following equation.
These cases first differ when defining $G_3$ in (iii).
By the division algorithm there exists a unique $a$ and $r$ such that $0 \leq r \leq m-2$ and \begin{equation} \label{divalg}
n_1b = a (m-1) +r.
\end{equation}
Also, as we will need in (vi) in both cases,
\begin{equation} \label{a>2}
a > 2
\end{equation}
because
$$\begin{array}{lllll} 
a&=& \frac{n_1b-r}{m-1}&\phantom{W}&\text{By (\ref{divalg}})\\[3pt]
&>& \frac{n_1b}{m-1}-1& &\text{Because }r\leq m-2\\[3pt]
&=& \frac{(2t_1b+1)b}{2t_2-2t_1-1}-1\\[3pt]
&>& \frac{(2t_1b+1)b}{6t_1-1}-1& &\text{Because } t_2< 4t_1\\[3pt]
&\geq& \frac{(2t_1(3)+1)(3)}{6t_1-1}-1& &\text{Because } b\geq 3\\[3pt]
&=& (\frac{6t_1+1}{6t_1-1})3-1\\[3pt]
&>& (1)3-1\\[3pt]
&=& 2.
\end{array}$$
For clarity, the cases are written completely separately with notes where the two cases are the same.

\noindent
{\bf Case 1:} Assume $a$ is even.

\begin{enumerate}[i)]
\item Let $G_1=K_{n_1}$ on the vertex set $X$ with any $(2t_1+1, 2t_1)$-edge-coloring, call it $W_1$, using the colors in $C_O$ (such a coloring exists by Theorem \ref{mainlambda1}). Let $S_c$ be the set of vertices in $X$ missing color $c$ in $W_1$ for $1 \leq c \leq 2t_1+1$. 

\item Form $G_{2}=bK_{n_1,m}$ with parts $X$ and $Y$ using the following steps. For $y_p\in Y$, let color $s_1+1+p$ be missing from $y_p$.
For $x_i\in X$, we will color $b$ edges with color $s_1+1+p$ between $x_i$ and $Y$.
Informally, for $0 \leq p \leq m-1$, first color the edge $\{x_1, y_{p+1 \Mod{m}}\}$ with color $s_1+1+p$ and then color the edge between $x_1$ and ``next" vertex in $Y$ skipping $y_p$ until there are $b$ edges of color $s_1+1+p$ between $x_1$ and $Y$.
Then color $b$ edges between $x_2$ and $Y$ by ``Picking up where we left off in $Y$".
Continue to do this until each vertex in $X$ has $b$ edges colored $s_1+1+p$ between it and $Y$.
Formally, for $x_i\in X$, color an edge $\{x_i,y_{p+h+1\Mod{m}}\}$ with color $s_1+1+p$, where $h \equiv (j+(i-1)b)\Mod{m-1}$ 
for $0 \leq p \leq m-1$ and $0\leq j\leq b-1$
(see Figure \ref{bipartex} (a)).
Note that each vertex $x_i \in X$ is incident to $b$ edges of color $s_1+1+p$, the vertices $y_{p+1 \Mod{m}}, y_{p+2\Mod{m}}, \dots ,y_{p+r \Mod{m}}$ are incident to $a+1$ edges colored $s_1+1+p$, the vertices $y_{p+r+1 \Mod{m}}, y_{p+r+2 \Mod{m}}, \dots, y_{p-1 \Mod{m}}$ are incident to $a$ edges colored $s_1+1+p$, and $y_p$ is incident to no edges colored $s_1+1+p$ (see Figure \ref{bipartex} (b)).

\begin{figure}
\centering
\begin{tabular}{ccc}
    \begin{tikzpicture}
[scale=0.35, every node/.style={draw,shape=circle,fill=black, scale=0.6}]
\node[black] at (-4.5, 8)   (1) {};
\node[black] at (-4.5, 5.75)  (2) {};
\node[draw=none, fill=none, align=center] at (-4.5, 1.25) {\Large $\vdots$};
\node[black] at (-4.5, -3)   (3) {};
\node[black] at (4.5, 8)   (4) {};
\node[black] at (4.5, 5.75)  (5) {};
\node[black] at (4.5, 3.5)   (6) {};
\node[black] at (4.5, 1.25)   (7) {};

\draw[line width=1.3pt, bend left=7, color=red] (1) to (7);
\draw[line width=1.3pt, bend right=7, color=red] (1) to (7);

\draw[line width=1.3pt, bend left=7, color=red] (1) to (6);
\draw[line width=1.3pt, bend right=7, color=red] (1) to (6);

\draw[line width=1.3pt, color=red] (1) to (4);

\draw[line width=1.3pt, bend left=7, color=red] (2) to (6);
\draw[line width=1.3pt, bend right=7, color=red] (2) to (6);

\draw[line width=1.3pt, bend left=7, color=red] (2) to (4);
\draw[line width=1.3pt, bend right=7, color=red] (2) to (4);

\draw[line width=1.3pt, color=red] (2) to (7);

\draw[line width=1.3pt, bend left=7, color=red] (3) to (4);
\draw[line width=1.3pt, bend right=7, color=red] (3) to (4);

\draw[line width=1.3pt, bend left=7, color=red] (3) to (7);
\draw[line width=1.3pt, bend right=7, color=red] (3) to (7);

\draw[line width=1.3pt, color=red] (3) to (6);

\node[draw=none, fill = none, align=center] at (5.5,8) {\large $y_0$};
\node[draw=none, fill = none, align=center] at (5.5,5.75) {\large $y_1$};
\node[draw=none, fill = none, align=center] at (5.5,3.5) {\large $y_2$};
\node[draw=none, fill = none, align=center] at (5.5,1.25) {\large $y_3$};

\node[draw=none, fill = none, align=center] at (-5.5,8) {\large $x_1$};
\node[draw=none, fill = none, align=center] at (-5.5,5.75) { \large $x_2$};
\node[draw=none, fill = none, align=center] at (-5.5,-3) {\large $x_{11}$};

\end{tikzpicture} &
& \begin{tikzpicture}
[scale=0.35, every node/.style={draw,shape=circle,fill=black, scale=0.6}]
    \clip (-9,9) rectangle (10.25,-4.5);

\node[black] at (-4.5, 8)   (1) {};
\node[black] at (-4.5, 5.75)  (2) {};
\node[draw=none, fill=none, align=center] at (-4.5, 1.25) {\Large $\vdots$};
\node[black] at (-4.5, -3)   (3) {};
\node[black] at (4.5, 8)   (4) {};
\node[black] at (4.5, 5.75)  (5) {};
\node[black] at (4.5, 3.5)   (6) {};
\node[black] at (4.5, 1.25)   (7) {};

\node[white] at (-3.25, 8.75)   (101) {};
\node[white] at (-3.25, 8.35)   (102) {};
\node[white] at (-3.25, 8)   (103) {};
\node[white] at (-3.25, 7.65)   (104) {};
\node[white] at (-3.25, 7.25)   (105) {};

\node[white] at (-3.25, 6.5)   (201) {};
\node[white] at (-3.25, 6.1)   (202) {};
\node[white] at (-3.25, 5.75)   (203) {};
\node[white] at (-3.25, 5.4)   (204) {};
\node[white] at (-3.25, 5)   (205) {};

\node[white] at (-3.25, -2.25)   (301) {};
\node[white] at (-3.25, -2.65)   (302) {};
\node[white] at (-3.25, -3)   (303) {};
\node[white] at (-3.25, -3.35)   (304) {};
\node[white] at (-3.25, -3.75)   (305) {};

\node[white] at (3, 8.75)   (401) {};
\node[draw=none, fill=none, align=center] at (3.25, 8.15)  {\Large $\vdots$};
\node[white] at (3, 7.25)   (403) {};

\node[white] at (3, 4.25)   (601) {};
\node[draw=none, fill=none, align=center] at (3.25, 3.65)  {\Large $\vdots$};
\node[white] at (3, 2.75)   (603) {};
\node[white] at (3, 2.35)   (604) {};

\node[white] at (3, 2)   (701) {};
\node[draw=none, fill=none, align=center] at (3.25, 1.4)  {\Large $\vdots$};
\node[white] at (3, 0.5)   (703) {};

\draw[line width=1.3pt, color=red] (1) to (101);
\draw[line width=1.3pt, color=red] (1) to (102);
\draw[line width=1.3pt, color=red] (1) to (103);
\draw[line width=1.3pt, color=red] (1) to (104);
\draw[line width=1.3pt, color=red] (1) to (105);

\draw[line width=1.3pt, color=red] (2) to (201);
\draw[line width=1.3pt, color=red] (2) to (202);
\draw[line width=1.3pt, color=red] (2) to (203);
\draw[line width=1.3pt, color=red] (2) to (204);
\draw[line width=1.3pt, color=red] (2) to (205);

\draw[line width=1.3pt, color=red] (3) to (301);
\draw[line width=1.3pt, color=red] (3) to (302);
\draw[line width=1.3pt, color=red] (3) to (303);
\draw[line width=1.3pt, color=red] (3) to (304);
\draw[line width=1.3pt, color=red] (3) to (305);

\draw[line width=1.3pt, color=red] (4) to (401);
\draw[line width=1.3pt, color=red] (4) to (403);

\draw[line width=1.3pt, color=red] (6) to (601);
\draw[line width=1.3pt, color=red] (6) to (603);
\draw[line width=1.3pt, color=red] (6) to (604);

\draw[line width=1.3pt, color=red] (7) to (701);
\draw[line width=1.3pt, color=red] (7) to (703);

\node[draw=none, fill = none, align=center] at (5.5,8) {\large $y_0$};
\node[draw=none, fill = none, align=center] at (6.25,5.75) {\large $y_1=y_p$};
\node[draw=none, fill = none, align=center] at (5.5,3.5) {\large $y_2$};
\node[draw=none, fill = none, align=center] at (5.5,1.25) {\large $y_3$};

\node[draw=none, fill = none, align=center] at (-5.5,8) {\large $x_1$};
\node[draw=none, fill = none, align=center] at (-5.5,5.75) {\large $x_2$};
\node[draw=none, fill = none, align=center] at (-5.5,-3) {\large $x_{11}$};


\node[draw=none, fill = none, align=center] at (-8.25, 2.4) {\rotatebox{90}{\large $b$ edges colored $s_1+1+p$}};

\node[draw=none, fill = none, align=center] at (8,8) {\large $a$ edges\\ \large colored \\ \large $s_1+1+p$};
\node[draw=none, fill = none, align=center] at (8,3.5) {\large $a+1$ edges\\ \large colored \\ \large $s_1+1+p$};
\node[draw=none, fill = none, align=center] at (8,1.25) {\large $a$ edges\\ \large colored \\ \large $s_1+1+p$};

\textbf{\draw[thick, decorate, decoration={brace, amplitude=0.6cm, aspect=0.5}, xscale=-1, shift={(9,-7.5)}] 
    plot [smooth, tension=3.25] coordinates {(-3,3) (-3,3) (0,1) (2,2) (3,3)};}

\end{tikzpicture}\\
(a)&&(b)\\
\end{tabular}  
\caption{Two representations of the graph induced by the edges colored $s_1+2$ in $G_2$ for the embedding of an equitable $(3,2)$-edge-coloring of $K_{11}$ into an equitable $(7,6)$-edge-coloring of $K_{31}$. Here, $n_1b=11(5)=18(3)+1=a(m-1)+r$. Therefore, $r=1$ vertices in $Y$ are incident to $a+1=19$ edges colored $s_1+2$, $m-r-1=2$ vertices in $Y$ are incident to $a=18$ edges colored $s_1+2$, and $1$ vertex in $Y$ is incident to no edges colored $s_1+2$.}
\label{bipartex}
\end{figure}

We now check the coloring on $G_2$ is well-defined by checking that $m_{G_2}(x_i,y_p)\\ =b$.
By the division algorithm, there exists unique nonnegative integers $\alpha$ and $\gamma$ such that $0 \leq \gamma \leq m-2$ and $b=\alpha(m-1)+\gamma$.
(Note that the left side of this equation is the number of edges of a particular color between a vertex $x_i \in X$ and $Y$. Previously, in (\ref{divalg}), the left side of the equation was the number of edges of a particular color between $X$ and $Y$.)
For a given color $s_1+1+p$, the vertices $y_{p+1\Mod{m}},\dots,y_{p+\gamma\Mod{m}}$ are incident to $\alpha+1$ edges colored $s_1+1+p$, the vertices $y_{p+\gamma +1\Mod{m}},\dots,y_{p-1\Mod{m}}$ are incident to $\alpha$ edges colored $s_1+1+p$, and the vertex $y_p$ is incident to no edges colored $s_1+1+p$. 
Therefore, (considering ``for how many colors $y_p$ is in each of the previous $3$ groups") for $x_i\in X$ and $y_p\in Y$, there are exactly $\alpha+1$ edges of exactly $\gamma$ colors in $C_N$ between $x_i$ and $y_p$, exactly $\alpha$ edges of exactly $m-\gamma-1$ colors in $C_N$ between $x_i$ and $y_p$, and $0$ edges of exactly $1$ color in $C_N$ between $x_i$ and $y_p$. Then, for $x_i\in X$ and $y_p\in Y$, $m_{G_2}(x_i,y_p)=\gamma(\alpha+1)+(m-\gamma-1)(\alpha)+1(0)=\alpha(m-1)+\gamma=b$.

\item Form $B_p$ for $0\leq p\leq m-1$ by taking the disjoint union of $\frac{m-r-1}{2}$ copies of $K_2$ and $r+1$ copies of $K_1$ on the vertex set $Y$ using the following steps.
Attach vertices $y_{p+r+j\Mod{m}}$ and $y_{p-j\Mod{m}}$ with an edge colored $s_1+1+p$ for $1 \leq j \leq \frac{m-r-1}{2}$
(Since $m$ and $a$ are even, by (\ref{divalg}), $r$ is odd, so $m-r-1$ is even, and $m-r-1/2$ is an integer).
See Figure \ref{iiiex} (a).
Let $G_3 = \bigcup_{0\leq p\leq m-1} B_p$.
Note that $G_3$ is regular.
We claim it is $(m-r-1$)-regular.
By definition, $\vert E(B_p) \vert = \frac{m-r-1}{2}$.
Therefore, $\vert E(G_3) \vert = \frac{m(m-r-1)}{2}$.
So, $\sum_{y \in V(G_3)} \deg_{G_3}(y)= m(m-r-1)$.
Thus, $G_3$ is $(m-r-1)$-regular.

\begin{figure}
\centering
\begin{tabular}{ccc}
    \begin{tikzpicture}[scale=.75]
    
    \foreach \angle in {80, 90, 100, 159, 173.5, 186.5, 201, 272, 281, 339, 354, 367, 381} {
        \draw[fill] (\angle:2.5cm) circle (2pt);
    }
    
    \node at (43:2.5cm) {$\cdot$};
    \node at (45:2.5cm) {$\cdot$};
    \node at (47:2.5cm) {$\cdot$};
    \node at (133:2.5cm) {$\cdot$};
    \node at (135:2.5cm) {$\cdot$};
    \node at (137:2.5cm) {$\cdot$};
    \node at (223:2.5cm) {$\cdot$};
    \node at (225:2.5cm) {$\cdot$};
    \node at (227:2.5cm) {$\cdot$};
    \node at (313:2.5cm) {$\cdot$};
    \node at (315:2.5cm) {$\cdot$};
    \node at (317:2.5cm) {$\cdot$};
    
    \node at (0,2.8) {$y_p$};
    \node at (270:3.5cm) {};
    
    \curlybrace[tip angle=5,thick]{95}{275}{2.7}
    \curlybrace[tip angle=3,thick]{-83}{85}{2.7}

    \node at (-3, -0.2) {\rotatebox{90}{$m-r-1$ vertices}};
    \node at (3,0) {\rotatebox{270}{$r$ vertices}};
    

    \draw[line width=1pt, color=black] (159:2.5cm) to (201:2.5cm);
    \draw[line width=1pt, color=black] (173.5:2.5cm) to (186.5:2.5cm);
    \draw[line width=1pt, color=black] (100:2.5cm) to (272:2.5cm);
\end{tikzpicture}
&
\phantom{W}
&
\begin{tikzpicture}[scale=0.75]
    
    \foreach \angle in {80, 90, 100, 155, 169, 197, 210, 272, 281, 339, 354, 367, 381} {
        \draw[fill] (\angle:2.5cm) circle (2pt);
    }
    
    \draw[fill] (185:2.58cm) circle (2pt);
    \node at (43:2.5cm) {$\cdot$};
    \node at (45:2.5cm) {$\cdot$};
    \node at (47:2.5cm) {$\cdot$};
    \node at (133:2.5cm) {$\cdot$};
    \node at (135:2.5cm) {$\cdot$};
    \node at (137:2.5cm) {$\cdot$};
    \node at (223:2.5cm) {$\cdot$};
    \node at (225:2.5cm) {$\cdot$};
    \node at (227:2.5cm) {$\cdot$};
    \node at (313:2.5cm) {$\cdot$};
    \node at (315:2.5cm) {$\cdot$};
    \node at (317:2.5cm) {$\cdot$};
    
    \node at (0,2.8) {$y_p$};
    \node at (270:3.5cm) {};
    
    \curlybrace[tip angle=5,thick]{95}{275}{2.7}
    \curlybrace[tip angle=3,thick]{-83}{85}{2.7}

    \node at (-3, -0.2) {\rotatebox{90}{$m-r-1$ vertices}};
    \node at (3,0) {\rotatebox{270}{$r$ vertices}};
    
    
    \draw[line width=1pt, bend right=30, color=black] (210:2.5cm) to (155:2.5cm);
    \draw[line width=1pt, bend right=45, color=black] (210:2.5cm) to (155:2.5cm);
    \draw[line width=1pt, bend right=30, color=black] (197:2.5cm) to (169:2.5cm);
    \draw[line width=1pt, bend right=45, color=black] (197:2.5cm) to (169:2.5cm);
    \draw[line width=1pt, bend left=8, color=black] (100:2.5cm) to (272:2.5cm);
    \draw[line width=1pt, bend right=8, color=black] (100:2.5cm) to (272:2.5cm);

   \draw[line width=1pt, black] (184:2.58cm) .. controls +(20:0.6cm) and +(110:0.6cm) .. (184:2.58cm);

    \draw[line width=1pt, color=black] (339:2.5cm) to (381:2.5cm);
    \draw[line width=1pt, color=black] (354:2.5cm) to (367:2.5cm);
    \draw[line width=1pt, color=black] (80:2.5cm) to (281:2.5cm);

\end{tikzpicture}\\
(a) $B_p$ in Case 1 & & (b) $B_p \cup D_p$ in Case 2
\end{tabular}
\caption{Graphs defined in Case 1 (iii) and Case 2 (iii)}
\label{iiiex}
\end{figure}

\item Let $G_{4}=(b^2 - (a+1))K_m$ on the vertex set $Y$. Using Lemma \ref{Kdecomp}, decompose $G_{4}$ into $m\left(\frac{b^2-(a+1)}{2}\right)$ cycles of length $(m-1)$.
We color the edges of $G_4$ as follows.
For $0\leq p\leq m-1$, color the $\frac{b^2-(a+1)}{2}$ cycles with no edges incident to $y_p$ with color $s_1+1+p$ (This is possible because each cycle excludes exactly $1$ vertex $y_p \in Y$).
\item Using Lemma \ref{1factKn}, take a $1$-factorization of $K_m$ on the vertex set $Y$, producing $(m-1)$ edge-disjoint $1$-factors.
For each $k \in C_O$, create $F_{k}$  by coloring one distinct $1$-factor with color $k$.
This is possible because $m-1\geq 2t_1+1$
(Since $2t_1 <t_2$, $2t_1<2(t_2-t_1)=m$, and because $m$ is even, $m-1\geq 2t_1+1$).
Let $G_{5}=\bigcup_{k\in C_O}F_{k}$.
\item Let $H = b^2K_m$ on the vertex set $Y$ with $\frac{b(b-1)}{2}$ loops on each $y \in Y$.
Let $G_6 = H-(G_3 \cup G_4 \cup G_5)$.
We first check that $G_6$ exists (we have not removed nonexistent edges from $H$).
For each pair of vertices $y_p,y_s \in Y$ there are at most $2$ edges between $y_p$ and $y_s$ in $G_3$, exactly $b^2-(a+1)$ edges between $y_p$ and $y_s$ in $G_4$, and at most $1$ edge between $y_p$ and $y_s$ in $G_5$. The minimum number of edges between $y_p$ and $y_s$ in $G_6$ is $b^2-2-(b^2-(a+1))-1 = a-2$. By (\ref{a>2}), $a-2 > 0$, so $G_6$ exists.

The degree of each vertex in $H$ is $(b^2)(m-1)+b(b-1)$, in $G_3$ is $m-r-1$, in $G_4$ is $(b^2-(a+1))(m-1)$, and in $G_5$ is $2t_1+1$. Then, the degree of each vertex $y \in Y$ in $G_6$ is 

$\begin{array}{llll}
\deg_{G_6}(y) & = & \deg_{H}(y)-\deg_{G_3}(y)-\deg_{G_4}(y)-\deg_{G_5}(y) \\[4pt]
& = & b^2(m-1)+b(b-1)-(m-r-1)\\[4pt]
& & -(b^2-(a+1))(m-1))-(2t_1+1) \\[4pt]
&=& a(m-1)+r-(2t_1+1)+b(b-1)\\[4pt]
&=& n_1b-(2t_1+1)+b(b-1)& \text{By (\ref{divalg})}\\[4pt]
&=& (2t_1b+1)b-2t_1-1+b(b-1)\\[4pt]
&=& (2t_1+1)(b^2-1).\\
\end{array}$

By Theorem \ref{petersen}, $G_6$ can be partitioned into $\frac{(2t_1+1)(b^2-1)}{2}$ edge-disjoint $2$-factors. For $k\in C_O$, color $\frac{(b^2-1)}{2}$ distinct $2$-factors with color $k$.
\end{enumerate}

We define $G'=\bigcup_{i=1}^6G_i$. We will use Theorem \ref{amalgamationtheorem} to form the detachment $G=K_{n_2}$ with our desired equitable $(2t_2+1,2t_2)$-edge-coloring and an arbitrary equitable $(2t_1+1,2t_1)$-edge-coloring of $K_{n_1}$ as a subgraph. Recall that $\phi: V(G) \rightarrow V(G')$ is the amalgamation function with amalgamation numbers $\eta(x_i)= 1$ for $1 \leq i \leq n_1$ and $\eta(y_p)= b$ for $ 0 \leq p \leq  m-1$.

We first check that there are $b$ edges of each color incident to each vertex $v_i\in V_O$ except exactly one color in $C_O$. By (i), each $x_i\in X$ is missing some $c \in C_O$.

\begin{itemize}
\item For $k\in C_O\setminus \{c\}$, by (i), $deg_{G'(k)}(x_i)=b$. By Theorem \ref{amalgamationtheorem} (i), $deg_{G(k)}(v_i)=deg_{G'(k)}(x_i)/\eta(x_i)=b/1=b$ for $v_i\in \phi^{-1}(x_i)$.
\item For $k\in C_N$, by (ii), $deg_{G'(k)}(x_i)=b$. By Theorem \ref{amalgamationtheorem} (i), $deg_{G(k)}(v_i)=deg_{G'(k)}(x_i)/\eta(x_i)=b/1=b$ for $v_i\in \phi^{-1}(x_i)$.
\end{itemize}

We now check that each vertex $v_i\in V_N$ is incident to $b$ edges of each color $k$, except for exactly one color in $C_N$.

\begin{itemize}
\item For $k\in C_O$ and $y_p\in Y$, by (v) and (vi), $deg_{G'(k)}(y_p)=1+2\left(\frac{b^2-1}{2}\right)=b^2$. By Theorem \ref{amalgamationtheorem} (i), $deg_{G(k)}(v_i)=deg_{G'(k)}(y_p)/\eta(y_p)=b^2/b=b$ for $v_i\in \phi^{-1}(y_p)$.
\item For $k\in C_N\setminus \{p\}$ and $y_p\in Y$, by (ii), (iii), and (iv), $deg_{G'(k)}(y_p)=a+1+(b^2-(a+1))=b^2$. By Theorem \ref{amalgamationtheorem} (i), $deg_{G(k)}(v_i)=deg_{G'(k)}(y_p)/\eta(y_p)=b^2/b=b$ for $v_i\in \phi^{-1}(y_p)$.
\end{itemize}

We now check that $G=K_{n_2}$ by determining there is exactly one edge between each pair of vertices in $V(G)$.

\begin{itemize}
\item We first show there is exactly $1$ edge between each pair of vertices $v_i,v_j\in V_O$. For $x_i,x_j\in X$, by (i), $m_{G'}(x_i,x_j)=1$. By Theorem \ref{amalgamationtheorem} (iii), $m_G(v_i,v_j)=m_{G'}(x_i,x_j)/(\eta(x_i)\eta(x_j))=1/((1)(1))=1$ for $v_i\in \phi^{-1}(x_i)$ and $v_j\in \phi^{-1}(x_j)$.
\item We now show there is exactly $1$ edge between each pair of vertices $v_i\in V_O$ and $v_j\in V_N$. For $x_i\in X$ and $y_p\in Y$, by (ii), $m_{G'}(x_i,y_p)=b$. By Theorem \ref{amalgamationtheorem} (iii), $m_G(v_i,v_j)=m_{G'}(x_i,y_p)/(\eta(x_i)\eta(y_p))=b/((1)(b))=1$ for $v_i\in \phi^{-1}(x_i)$ and $v_j\in \phi^{-1}(y_p)$.
\item We now show there is exactly $1$ edge between each pair of vertices $v_i,v_j\in V_N$, where $v_i\in \phi^{-1}(y_p)$ and $v_j\in \phi^{-1}(y_s)$, and $y_p,y_s\in Y$. By (iii), (iv), (v), and (vi),
\[\begin{aligned}
m_{G'}(y_p,y_s) &= \sum_{i=3}^{6}m_{G_i}(y_p,y_s)\\
&= \sum_{i=3}^{5}m_{G_i}(y_p,y_s) + m_{G_6}(y_p,y_s)\\
&= \sum_{i=3}^{5}m_{G_i}(y_p,y_s) +b^2-\sum_{i=3}^{5}m_{G_i}(y_p,y_s))\\
&= b^2.
\end{aligned}\]
By Theorem \ref{amalgamationtheorem} (iii), $m_G(v_i,v_j)=m_{G'}(y_p,y_s)/(\eta(y_p)\eta(y_s))=b^2/((b)(b))=1$ for $v_i\in \phi^{-1}(y_p)$ and $v_j\in \phi^{-1}(y_s)$.
\item Finally, we show there is exactly $1$ edge between each pair of vertices $v_i,v_j\in V_N$, where $v_i,v_j\in \phi^{-1}(y_p)$ for $y_p \in Y$. By (vi), $\ell_{G'}(y_p)=\frac{b(b-1)}{2}={b \choose 2}={\eta(y_p) \choose 2}$. By Theorem \ref{amalgamationtheorem} (ii), $m_G(v_i,v_j)=\ell_{G'}(y_p)/{\eta(y_p)\choose 2}={\eta(y_p)\choose 2}/{\eta(y_p)\choose 2}=1$ for $v_i,v_j\in \phi^{-1}(y_p)$.
\end{itemize}

\noindent
{\bf Case 2:} Assume $a$ is odd.

\begin{enumerate}[i)]
\item Define $G_1$ as done in Case 1 (i).

\item Define $G_2$ as done in Case 1 (ii).

\item Form $B_p$, $0\leq p\leq m-1$, by taking the disjoint union of $\frac{r}{2}$ copies of $K_2$ and $m-r$ copies of $K_1$ on the vertex set $Y$ using the following steps. Attach vertices $y_{p+1+j\Mod{m}}$ and $y_{p+r-j\Mod{m}}$ with an edge colored $s_1+1+p$ for $0\leq j\leq \frac{r}{2}-1$ (Since $a$ and $m-1$ are odd, by (\ref{divalg}), $r$ is even, and $r/2$ is an integer). Form $D_p$, $0\leq p\leq m-1$, by taking the disjoint union of $\frac{m-r-2}{2}$ copies of $2K_2$, $1$ copy of $C_1$, and $r+1$ copies of $K_1$ on the vertex set $Y$ using the following steps. Attach vertices $y_{p+r+j\Mod{m}}$ and $y_{p-j\Mod{m}}$  with $2$ edges colored $s_1+1+p$ for $1\leq j\leq \frac{m-r-2}{2}$ (Since $r$ is even and $m$ is even, $m-r-1$ is odd, and $\frac{m-r-2}{2}$ is an integer).
Place a loop on vertex $y_{p+\frac{m+r}{2}\Mod{m}}$ and color it with color $s_1+1+p$. 
See Figure \ref{iiiex} (b).
Let $G_3=\bigcup_{0\leq p\leq m-1}(B_p\cup D_p)$. Note that $G_3$ is regular. We claim it is ($2m-r-2$)-regular. By definition, $|E(B_p\cup D_p)|=\frac{r}{2}+2(\frac{m-r-2}{2})+1=m-\frac{r}{2}-1$. Therefore, $|E(G_3)|=m(m-\frac{r}{2}-1)$. So, $\sum_{y\in V(G_3)}deg_{G_3}(y)=2m(m-\frac{r}{2}-1)=m(2m-r-2)$. Thus, $G_3$ is ($2m-r-2$)-regular.

\item Let $G_{4}=(b^2 - (a+2))K_m$ on the vertex set $Y$. Using Lemma \ref{Kdecomp}, decompose $G_{4}$ into $m\left(\frac{b^2-(a+2)}{2}\right)$ cycles of length $m-1$. We color the edges of $G_4$ as follows. For $0\leq p\leq m-1$, color the $\frac{b^2-(a+2)}{2}$ cycles with no edges incident to $y_p$ with color $s_1+1+p$
(This is possible because each cycle excludes exactly $1$ vertex $y_p \in Y$).

\item Define $G_5$ as done in Case 1 (v).

\item Let $H=b^2K_m$ on the vertex set $Y$ with $\frac{b(b-1)}{2}$ loops on each $y\in Y$. Let $G_6=H-(G_3\cup G_4\cup G_5)$.
We first check that $G_6$ exists (we have not removed nonexistent edges from $H$).
For each pair of vertices $y_p,y_s \in Y$ there are at most $4$ edges between $y_p$ and $y_s$ in $G_3$, exactly $b^2-(a+2)$ edges between $y_p$ and $y_s$ in $G_4$, and at most $1$ edge between $y_p$ and $y_s$ in $G_5$. The minimum number of edges between $y_p$ and $y_s$ in $G_6$ is $b^2-4-(b^2-(a+2))-1=a-3$. By (\ref{a>2}), $a > 2$, and $a$ is odd, so $a-3\geq 0$ and $G_6$ exists. 

The degree of each vertex in $H$ is $b^2(m-1)+b(b-1)$, in $G_3$ is $2m-r-2$, in $G_4$ is $(b^2-(a+2))(m-1)$, and in $G_5$ is $2t_1+1$. Then, the degree of each vertex $y\in Y$ in $G_6$ is 

$\begin{array}{llll}
\deg_{G_6}(y)&=&\deg_H(y)-\deg_{G_3}-\deg_{G_4}-\deg_{G_5}\\[4pt]
&=& b^2(m-1)+b(b-1)-(2m-r-2) \\[4pt]
& & -(b^2-(a+2))(m-1)-(2t_1+1)\\[4pt]
&=& a(m-1)+r-(2t_1+1)+b(b-1)\\[4pt]
&=& n_1b-(2t_1+1)+b(b-1) & \text{By (\ref{divalg})}\\[4pt]
&=& (2t_1b+1)b-2t_1-1+b(b-1)\\[4pt]
&=& (2t_1+1)(b^2-1).\\
\end{array}$

By Theorem \ref{petersen}, $G_6$ can be partitioned into $\frac{(2t_1+1)(b^2-1)}{2}$ edge-disjoint $2$-factors. For $k\in C_O$, color $\frac{(b^2-1)}{2}$ distinct $2$-factors with color $k$.
\end{enumerate}

We define $G'=\bigcup_{i=1}^6G_i$. We will use Theorem \ref{amalgamationtheorem} to form the detachment $G=K_{n_2}$ with our desired equitable $(2t_2+1,2t_2)$-edge-coloring and an arbitrary equitable $(2t_1+1,2t_1)$-edge-coloring of $K_{n_1}$ as a subgraph. Recall that $\phi: V(G) \rightarrow V(G')$ is the amalgamation function with amalgamation numbers $\eta(x_i)= 1$ for $1 \leq i \leq n_1$ and $\eta(y_p)= b$ for $ 0 \leq p \leq  m-1$. 

We first check that there are $b$ edges of each color incident to each vertex $v_i\in V_O \cup V_N$ except exactly one color in $C_O \cup C_N$.
For $k\in C_N\setminus \{p\}$ and $y_p \in Y$, by (ii), (iii), and (iv), $deg_{G'(k)}(y_p)=a+2+(b^2-(a+2))=b^2$. By Theorem \ref{amalgamationtheorem} (i), $deg_{G(k)}(v_i)=deg_{G'(k)}(y_p)/\eta(y_p)=b^2/b=b$ for $v_i\in \phi^{-1}(y_p)$.
Checking this for all other $v_i \in V_O \cup V_N$ is the same as Case 1
(see bullets 1-3).





We now check that $G=K_{n_2}$ by determining there is exactly one edge between each pair of vertices in $v_i,v_j\in V(G)$.
For $v_i,v_j\in V_N$, where $v_i,v_j\in \phi^{-1}(y_p)$ for $y_p \in Y$, by (iii) and (vi), $\ell_{G'}(y_p)= 1+\left(\frac{b(b-1)}{2}-1\right)=\frac{b(b-1)}{2}={b \choose 2}={\eta(y_p)\choose 2}$. By Theorem \ref{amalgamationtheorem} (ii), $m_G(v_i,v_j)=\ell_{G'}(y_p)/{\eta(y_p)\choose 2}={\eta(y_p)\choose 2}/{\eta(y_p)\choose 2}=1$. Checking this for all other $v_i,v_j\in V(G)$ is the same as Case 1 (see bullets 5-7).
\end{proof}

\subsection{Intuition for Separate Cases}
\label{Intuition}

We now explain why Lemma \ref{obs:Old-color-method-theorem} or Lemma \ref{obs:New-color-method-theorem} cannot be adapted to all embeddings.

A method similar to the method in Lemma \ref{obs:Old-color-method-theorem} of amalgamating all the vertices in $V_n$ into a single vertex can be used to embed an equitable $(s_1, p_1)$-edge-coloring of $K_{n_1}$ into an an equitable $(s_2, p_2)$-edge-coloring of $K_{n_2}$ for some  $2t_1<t_2<4t_1$.
For example, we have found a method for embedding of an equitable $(3,2)$-edge-coloring of $K_7$ in an equitable $(7,6)$-edge-coloring of $K_{19}$ using a single amalgamated vertex.
However, it cannot be used for all  $2t_1<t_2<4t_1$ as seen in the following observation.
Note that the method used in Lemma \ref{obs:Old-color-method-theorem} is based on the fact that all vertices in $V_N$ are missing the same color.

\begin{observation}\label{obs:color-missing-observation}
If the same color is missing from all vertices in $V_N$, then $bt_1 \geq t_2$.
\end{observation}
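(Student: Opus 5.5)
The plan is a counting argument over which vertex misses which color, using Observation \ref{missingone} and Lemma \ref{colorvectorlemma}. First I rewrite the target inequality. Since $n_1 = 2t_1b+1$, the inequality $bt_1 \geq t_2$ is equivalent to $n_1 - 1 \geq 2t_2$, that is,
$$n_1 \geq 2t_2 + 1 = s_2 .$$
So it suffices to show that the $n_1$ old vertices are at least as many as the $s_2$ colors. Let $W_2$ be the equitable $(s_2,p_2)$-edge-coloring of $K_{n_2}$, and let $c$ be the color missing at every vertex of $V_N$.

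\textbf{Step 1: the vertices of $V_N$ and of $S_c$ all miss $c$.} By Observation \ref{missingone}, each vertex of $K_{n_2}$ misses exactly one color in $W_2$. So each vertex of $V_N$ misses exactly $c$. Now consider a vertex $u \in V_O$ that misses $c$ in $W_1$, i.e.\ $u \in S_c$ in the notation of the earlier proofs; this requires $c \in C_O$. Such a $u$ has no edge of color $c$ inside $V_O$. It also has no edge of color $c$ to $V_N$, because no vertex of $V_N$ sees color $c$. Hence $u$ misses $c$ in $W_2$ as well.

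I also want $S_c \neq \emptyset$. If $c \in C_O$, then $|S_c|$ is odd by Lemma \ref{colorvectorlemma}(i) applied to $W_1$, since $n_1$ is odd. If instead $c \in C_N$, then every edge of color $c$ would have to lie inside $V_O$, which carries only old colors. So color $c$ would not be used at all, contradicting that $W_2$ uses exactly $s_2$ colors. Therefore $c \in C_O$ and $|S_c| \geq 1$. Altogether, at least $(n_2 - n_1) + 1$ vertices miss $c$ in $W_2$.

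\textbf{Step 2: every other color is missed by at least one vertex.} Take any color $k \neq c$. By Lemma \ref{colorvectorlemma}(iii) applied to $W_2$, we have $c_k(W_2) \leq n_2 - 1$. (Alternatively, $c_k(W_2)$ is even by part (i) while $n_2$ is odd.) So at least one vertex misses $k$. Since each vertex misses exactly one color, the sets of vertices missing distinct colors are disjoint and together partition $V(K_{n_2})$.

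\textbf{Step 3: count.} Summing the number of vertices missing each color over all $s_2$ colors gives
$$n_2 \;\geq\; \bigl((n_2-n_1)+1\bigr) + (s_2 - 1)\cdot 1 .$$
This simplifies to $n_1 \geq s_2 = 2t_2+1$, which is $bt_1 \geq t_2$ by the rewriting above.

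The only step that needs care is Step 1: ruling out $c \in C_N$, and confirming that the old vertices missing $c$ in $W_1$ still miss it in $W_2$ and are disjoint from $V_N$. This is what supplies the crucial extra $+1$ in the count. The remaining steps are direct applications of Observation \ref{missingone} and Lemma \ref{colorvectorlemma}.
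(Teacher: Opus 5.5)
Your proof is correct and uses the same counting argument as the paper. Every vertex misses exactly one color, so each of the other $s_2-1$ colors must be missed by a distinct vertex, and all of these vertices lie in $V_O$. The only difference is where the extra $+1$ comes from: you get it by showing $c\in C_O$ and $S_c\neq\emptyset$, whereas the paper stops at $n_1\ge s_2-1$ and observes that $2t_1b+1\ge 2t_2$ already forces $bt_1\ge t_2$ by integrality ($n_1$ is odd and $s_2-1$ is even), so your Step 1 is valid but not needed.
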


\begin{proof}
 Assume that all of the vertices in $V_N$ are missing the same one color in $C_O \cup C_N$.
 By Lemma $\ref{colorvectorlemma}$ we know that every color must be missing from at least one vertex.
 Thus the rest of the $s_2 - 1$ colors must be missing from a vertex in $V_O$, implying $n_1 \geq s_2-1$.
 So, $2t_1b+1 \geq 2t_2+1-1$.
 Therefore, $bt_1 \geq t_2$ (because $b$, $t_1$, and $t_2$ are integers).
\end{proof}

Because $b \geq 3$, the method from Lemma \ref{obs:Old-color-method-theorem} can be used for embedding when $t_2=2t_1$.
However, if $t_2 > bt_1$, this method cannot be used.
Therefore, for some $t_2$ where $2t_1 < t_2 < 4t_1$, a different method is necessary.

Also, a method similar to the method in Lemma \ref{obs:New-color-method-theorem} of having all the vertices in $V_O$ missing a color in $C_O$ cannot be used if $t_2=2t_1$ as seen in the following observation.

\begin{observation}
If $t_2=2t_1$, then the vertices in $V_O$ cannot all be missing a color in $C_O$.
\end{observation}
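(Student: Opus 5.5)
The plan is a direct double count of the edges joining $V_O$ to $V_N$, in the spirit of the proof of Lemma \ref{ineqlem}. First I would record the sizes of the objects involved when $t_2=2t_1$: $m=2t_2-2t_1=2t_1$, $n_1=2t_1b+1$, and $n_2=4t_1b+1$, so $|V_N|=n_2-n_1=2t_1b=mb$. By Observation \ref{bobservation}, every vertex of $K_{n_2}$ has exactly $b$ edges of each color it sees, and by Observation \ref{missingone} it misses exactly one color.

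Suppose for contradiction that every vertex of $V_O$ misses a color in $C_O$. Then each $v\in V_O$ sees all $m$ colors of $C_N$, so it is incident with exactly $mb$ edges colored from $C_N$. None of these edges lies inside $V_O$, since the edges of the embedded $K_{n_1}$ use only colors in $C_O$. Hence all $mb$ of them go to $V_N$. But $v$ has exactly $|V_N|=n_2-n_1=mb$ edges to $V_N$ in total. Therefore every edge between $V_O$ and $V_N$ carries a color from $C_N$.

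Now I would look at a single vertex $u\in V_N$. It is adjacent to all $n_1=2t_1b+1$ vertices of $V_O$, and by the previous step each of these edges has a color from $C_N$. On the other hand, $u$ has at most $b$ edges of each of the $m$ colors in $C_N$, so it is incident with at most $mb=2t_1b$ edges colored from $C_N$. This gives $2t_1b+1\le 2t_1b$, a contradiction, which proves the observation.

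I do not expect a real obstacle here. The only point needing care is the equality in the first step: the counts $|V_N|=mb$ and "exactly $mb$ new-color edges at each old vertex" must match exactly, and this uses $t_2=2t_1$ in an essential way. Equivalently, the argument is Case 2 of Lemma \ref{ineqlem} sharpened: with $V_O\cap N=\emptyset$, equation (\ref{oldvertineq}) gives $n_1mb\le (n_2-n_1)mb$, which fails when $n_2=2n_1-1$.
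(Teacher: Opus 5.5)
Your argument is correct and is essentially the paper's double count of $C_N$-colored edges between $V_O$ and $V_N$. The paper fixes one new color $k$ and compares $n_1b$ with $(n_2-n_1)b$, whereas you sum over all of $C_N$ and work at a single vertex of $V_N$ after first showing every cross edge is new-colored. One small correction to your closing remark: $|V_N|=n_2-n_1=(2t_2-2t_1)b=mb$ holds for every $t_2$, so the hypothesis $t_2=2t_1$ (indeed $t_2\le 2t_1$ suffices) is used only in the final inequality $n_1=2t_1b+1>mb$, not in that first step.
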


\begin{proof}
Assume for a contradiction that each vertex in $V_O$ is missing a color in $C_O$.
Then each vertex in $V_O$ needs to be incident to each color in $C_N$.
Let $k \in C_N$.
So, there must be $n_1b=(2t_1b+1)b$ edges colored $k$ between $V_O$ and $V_N$ (since each vertex in $V_O$ needs $b$ edges colored $k$ incident to it).
Since we have $n_2-n_1= 2t_1b$ vertices in $V_N$ and each one can have at most $b$ edges colored $k$ incident to it, then there are at most $(2t_1b)b$ edges colored $k$ incident to the vertices in $V_N$.
This is a contradiction.
 \end{proof}

Therefore, Lemma \ref{obs:Old-color-method-theorem} or Lemma \ref{obs:New-color-method-theorem} cannot be adapted to cover all values $2t_1 \leq t_2 \leq 4t_1$.




\section{Main Result}
\label{mainresultsection}

We begin with a helpful observation that is fundamental in the proof of Theorem \ref{mainthm}.

\begin{observation} \label{multipleembedobs}
Suppose an equitable $(s_1,p_1)$-edge-coloring of $K_{n_1}$, say $W_1$, can be embedded in an equitable $(s_2, p_2)$-edge-coloring of $K_{n_2}$, say $W_2$.
Also, suppose that $K_{n_2}$ with coloring $W_2$ can be embedded in an equitable $(s_3, p_3)$-edge-coloring of $K_{n_3}$, say $W_3$.
Then $K_{n_1}$ with coloring $W_1$ can be embedded in $K_{n_3}$ with $W_3$.
\end{observation}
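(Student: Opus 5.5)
The statement is transitivity of embedding, and I would prove it by composing the two vertex identifications and checking that the coloring restricts correctly. Write $W_1$ on $K_{n_1}$ and $W_2$ on $K_{n_2}$. By the definition of embedding, there is an injection $f:V(K_{n_1})\to V(K_{n_2})$ such that for every edge $\{u,v\}$ of $K_{n_1}$, $W_2(\{f(u),f(v)\})=W_1(\{u,v\})$. I am reading the paper's convention that the old colors $C_O$ sit inside the new color set and that embedded colors are preserved literally. If renaming of colors were allowed, $f$ would come with an injective color map $\sigma$ satisfying $W_2(\{f(u),f(v)\})=\sigma(W_1(\{u,v\}))$.

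Similarly, the second embedding gives an injection $g:V(K_{n_2})\to V(K_{n_3})$ with $W_3(\{g(x),g(y)\})=W_2(\{x,y\})$ for every edge $\{x,y\}$ of $K_{n_2}$, possibly together with a color map $\tau$.

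The plan is as follows.

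\begin{enumerate}
\item Set $h=g\circ f$. This is an injection $V(K_{n_1})\to V(K_{n_3})$, because a composition of injections is injective.
\item Every pair of distinct vertices $h(u),h(v)$ of $K_{n_3}$ is joined by an edge, since $K_{n_3}$ is complete. So $h$ maps $K_{n_1}$ onto a copy of $K_{n_1}$ inside $K_{n_3}$. This copy is contained in the image of $K_{n_2}$ under $g$, which is a proper subgraph of $K_{n_3}$ because $n_1<n_2<n_3$.
\item For each edge $\{u,v\}$ of $K_{n_1}$, chain the two color identities:
\[
W_3(\{h(u),h(v)\})=W_2(\{f(u),f(v)\})=W_1(\{u,v\}).
\]
With color maps the same chain gives $\tau\circ\sigma$ applied to $W_1(\{u,v\})$.
\item Therefore the restriction of $W_3$ to this copy is exactly $W_1$. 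Since $W_3$ is an equitable $(s_3,p_3)$-edge-coloring of all of $K_{n_3}$, this is precisely the definition of $W_1$ being embedded in $W_3$.
\end{enumerate}

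No equitability conditions need to be checked for $W_1$ inside $W_3$. The definition only requires that $W_3$ be equitable on $K_{n_3}$ and contain $W_1$ as a subgraph, and the definition does not demand that the color counts at each vertex be preserved.

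The only potential obstacle is bookkeeping: fixing precisely what ``with coloring $W_1$ as a subgraph'' means, literal equality of colors versus equality up to renaming. Once that convention is fixed, the argument is just composition of maps.
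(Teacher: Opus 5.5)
Your proposal is correct: composing the two vertex injections and chaining the color identities is exactly the immediate reasoning behind this observation, which the paper states without proof. You are also right that, under the paper's definition of embedding, only containment of $W_1$ in the equitable coloring $W_3$ needs checking, so no further equitability verification is required.
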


We now prove our main result.

\begin{theorem}
\label{mainthm}
An equitable $(2t_1+1, 2t_1)$-edge-coloring of $K_{n_1}$ can be embedded in an equitable $(2t_2+1, 2t_2)$-edge-coloring of $K_{n_2}$ if and only if $t_2 \geq 2t_1$.
\end{theorem}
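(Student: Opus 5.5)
Necessity is exactly Corollary \ref{equitableembedinequality}: if an embedding exists then $t_2 \geq 2t_1$. (For $b=1$ the statement is Lemma \ref{thm:b(n)=1}, read with the intended inequality $t_2\ge 2t_1$; so from now on I assume $b \geq 3$.) The whole plan for sufficiency is to build a chain of embeddings out of Lemmas \ref{obs:Old-color-method-theorem} and \ref{obs:New-color-method-theorem} and glue it together with Observation \ref{multipleembedobs}.

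Fix $t_1$ and $b$, and let $t_2 \geq 2t_1$. First suppose $2t_1 \leq t_2 < 4t_1$. If $t_2 = 2t_1$, Lemma \ref{obs:Old-color-method-theorem} gives the embedding directly. If $2t_1 < t_2 < 4t_1$, Lemma \ref{obs:New-color-method-theorem} gives it.

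Now suppose $t_2 \geq 4t_1$, and argue by strong induction on $t_2$. Let $k\ge 2$ be the unique integer with $2^k t_1 \leq t_2 < 2^{k+1} t_1$, and set $t' = 2^{k-1}t_1$, so that $t'\geq 2t_1$ and $2t' \leq t_2 < 4t'$.
\begin{enumerate}[(1)]
\item Since $t_1 < t' < t_2$ and $t' \geq 2t_1$, the induction hypothesis (or the base range above) embeds the given equitable $(2t_1+1,2t_1)$-edge-coloring of $K_{n_1}$ into some equitable $(2t'+1,2t')$-edge-coloring of $K_{n'}$, where $n' = 2t'b+1$.
\item Apply Lemma \ref{obs:Old-color-method-theorem} (when $t_2 = 2t'$) or Lemma \ref{obs:New-color-method-theorem} (when $2t'<t_2<4t'$) with $t'$ in the role of $t_1$. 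Both lemmas are stated for an arbitrary equitable coloring of the smaller complete graph, so they apply to the coloring produced in step (1). This embeds it into an equitable $(2t_2+1,2t_2)$-edge-coloring of $K_{n_2}$.
\item Observation \ref{multipleembedobs} composes the two embeddings.
\end{enumerate}
The odd parameter $b$ is the same at every stage, which is exactly what both lemmas require.

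The main point to check is that the intermediate stage is legitimate: the two lemmas must be applicable with the old parameter replaced by $t'$. This holds because they assume only $b\ge3$ odd, the appropriate range for $t_2$ relative to $t'$, and $n_i=2t_ib+1$, all of which are satisfied. A second point is that the dyadic ranges $[2^k t_1, 2^{k+1}t_1)$ together cover every $t_2\ge 2t_1$ and always leave room for an intermediate $t'$ with $2t_1 \le t'$. That is why the restriction $t_2<4t_1$ in Lemma \ref{obs:New-color-method-theorem} costs nothing.
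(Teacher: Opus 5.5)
Your proposal is correct and follows the paper's proof. Necessity comes from Corollary \ref{equitableembedinequality}, the case $b=1$ from Lemma \ref{thm:b(n)=1}, and for $b\ge 3$ the cases $t_2=2t_1$ and $2t_1<t_2<4t_1$ from Lemmas \ref{obs:Old-color-method-theorem} and \ref{obs:New-color-method-theorem}, with $t_2\ge 4t_1$ handled by repeated doubling glued together by Observation \ref{multipleembedobs}; your dyadic choice $t'=2^{k-1}t_1$ with strong induction just makes precise the paper's instruction to ``repeatedly apply Lemma \ref{obs:Old-color-method-theorem} until we are in one of the two cases above.''
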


\begin{proof}
By Corollary \ref{equitableembedinequality}, if we can embed an equitable $(2t_1+1, 2t_1)$-edge-coloring of $K_{n_1}$ into an equitable $(2t_2+1, 2t_2)$-edge-coloring of $K_{n_2}$, then $t_2 \geq 2t_1$.

By Lemma \ref{thm:b(n)=1}, if $b=1$, an equitable $(2t_1+1, 2t_1)$-edge-coloring of $K_{n_1}$ can be embedded in an equitable $(2t_2+1, 2t_2)$-edge-coloring of $K_{n_2}$.

Suppose $t_2 \geq 2t_1$ and $b \geq 3$.
Use the following steps.

\begin{enumerate}
    \item If $t_2=2t_1$, use Lemma \ref{obs:Old-color-method-theorem}.
    \item If $2t_1< t_2 < 4t_1$, use Lemma \ref{obs:New-color-method-theorem}. 
    \item If $t_2 \geq 4t_1$, then repeatedly apply Lemma \ref{obs:Old-color-method-theorem} until we are in one of the two cases above.
\end{enumerate}
By Observation \ref{multipleembedobs} we obtain our desired embedding.
\end{proof}

\section{Future Directions}
\label{futuresection}

In this paper, we answered Question \ref{mainquestbpreserved}, which was a subquestion of Question \ref{mainquest}.
In this section we describe other natural subquestions of Question \ref{mainquest}.

Consider the scheduling problem from the introduction. Suppose instead of preserving the rate of meetings per hour, the length of the conference stays constant. This corresponds to cases where $b$ increases and $s$ stays the same. 

\begin{question}
\label{questionfixingp}
Suppose $n_1$ and $n_2$ are integers and $n_1<n_2$ and $p \leq s$.
Can we embed an equitable $(s, p)$-edge-coloring of $K_{n_1}$ in an equitable $(s, p)$-edge-coloring of $K_{n_2}$?
\end{question}

Another possible direction is to modify Question \ref{mainquest} for $n_1$ and/or $n_2$ to be in the other cases described in Theorem \ref{mainlambda1}.
Therefore, in the following question, we no longer assume $n_i=4t_ix+2t_i+1$.

\begin{question}
\label{questionothern}
Suppose $n_1<n_2$, $1 \leq p_1 \leq n_1-1$, $1 \leq p_2 \leq n_2-1$, 
and $p_1 \leq p_2$ for integers $n_1$, $n_2$, $p_1$, and $p_2$.
When can we embed an equitable $(\chi_{p_1}(n_1), p_1)$-edge-coloring of $K_{n_1}$ in an equitable $(\chi_{p_2}(n_2), p_2)$-edge-coloring of $K_{n_2}$, say $W_2$ with colors $C$, so that $b(n_2, W_2, u,i)= \lceil \frac{n_1-1}{p_1} \rceil$ or $b(n_2, W_2, u,i)= \lceil \frac{n_1-1}{p_1} \rceil-1$
for all $u \in V(K_{n_2})$ and $i \in C_2$?
\end{question}

This problem can be split into the $4$ cases below.
Note that we considered changing ``$b(n_2, W_2, u,i)= \lceil \frac{n_1-1}{p_1} \rceil-1$'' to ``$b(n_2, W_2, u,i)= \lfloor \frac{n_1-1}{p_1} \rfloor$'' in the previous question.
However, that would eliminate Case 3 below.\\

\noindent {\bf Case 1:} Assume $n_1 \equiv p_1+1 \Mod{2p_1}$ and $n_2  \equiv p_2+1 \Mod{2p_2}$.
This case is solved in Theorem \ref{mainthm}.\\

\noindent {\bf Case 2:} Assume $n_1 \not\equiv p_1+1 \Mod{2p_1}$ and $n_2 \not\equiv p_2+1 \Mod{2p_2}$.  
This case is interesting because, unlike Case 1, the original coloring of $K_{n_1}$ impacts whether the graph can be embedded in $K_{n_2}$.
Consider the example of embedding an equitable $(2,2)$-edge-coloring of $K_4$ into an equitable $(2,2)$-edge-coloring of $K_5$. For one such coloring of $K_4$, $H_1$ (see Figure \ref{embedK4}), this embedding is possible, but for another equitable $(2,2)$-edge-coloring of $K_4$, $H_2$ (see Figure \ref{embedK4}), the embedding is not possible. \\

\begin{figure}
\centering
\begin{tabular}{c|c|c}
    \begin{tikzpicture}[scale=0.5, every node/.style={fill=black, circle, font=\scriptsize, very thin, scale=.6}]

\drawlabels[4]

\draw[line width=1.5pt, red] (1) -- (2);
\draw[line width=1.5pt, blue] (1) -- (3);
\draw[line width=1.5pt, blue] (1) -- (4);

\draw[line width=1.5pt, blue] (2) -- (3);
\draw[line width=1.5pt, red] (2) -- (4);

\draw[line width=1.5pt, red] (3) -- (4);

\end{tikzpicture} & \begin{tikzpicture}[scale=0.5, every node/.style={fill=black, circle, font=\scriptsize, very thin, scale=.6}]

\drawlabels[5]

\draw[line width=1.5pt, red] (1) -- (2);
\draw[line width=1.5pt, blue] (1) -- (3);
\draw[line width=1.5pt, blue] (1) -- (4);
\draw[line width=1.5pt, red] (1) -- (5);

\draw[line width=1.5pt, blue] (2) -- (3);
\draw[line width=1.5pt, red] (2) -- (4);
\draw[line width=1.5pt, blue] (2) -- (5);

\draw[line width=1.5pt, red] (3) -- (4);
\draw[line width=1.5pt, red] (3) -- (5);

\draw[line width=1.5pt, blue] (4) -- (5);

\end{tikzpicture} &  \begin{tikzpicture}[scale=0.5, every node/.style={fill=black, circle, font=\scriptsize, very thin, scale=.6}]

\drawlabels[4]

\draw[line width=1.5pt, red] (1) -- (2);
\draw[line width=1.5pt, blue] (1) -- (3);
\draw[line width=1.5pt, red] (1) -- (4);

\draw[line width=1.5pt, red] (2) -- (3);
\draw[line width=1.5pt, blue] (2) -- (4);

\draw[line width=1.5pt, red] (3) -- (4);

\end{tikzpicture}\\
$H_1$&Equitable $(2,2)$-edge&$H_2$\\
&-coloring of $K_5$ with&\\
&$H_1$ embedded&\\
\end{tabular}
\caption{Coloring dependent embedding of $K_4$ into $K_5$}
\label{embedK4}
\end{figure}

\noindent {\bf Case 3:}  Assume $n_1 \equiv p_1+1 \Mod{2p_1}$ and $n_2 \not\equiv p_2+1 \Mod{2p_2}$. 
Similarly to Case 2, the original coloring of $K_{n_1}$ impacts whether the graph can be embedded in $K_{n_2}$.
For example, an equitable $(5,4)$-edge-coloring of $K_{13}$ with color vector $(10,10,10,10,12)$ can be embedded in an equitable $(5,5)$-edge-coloring of $K_{15}$, but a coloring of $K_{13}$ with color vector $(4,12,12,12,12)$ cannot be embedded.
Note that these equitable $(5,4)$-edge-colorings of $K_{n_1}$ exist by \cite{MR3774803} and \cite{MR3764373}.
We exclude a detailed explanation for the sake of brevity.\\

\noindent {\bf Case 4:} Assume $n_1 \not\equiv p_1+1 \Mod{2p_1}$ and $n_2 \equiv p_2+1 \Mod{2p_2}$.
There does not seem to be the same coloring dependence as in  Cases 2 and 3.
Therefore, we believe this case can be done in a similar manner to Case 1.
\\

\section*{Acknowledgments}

The work was supported by the College of Charleston Office of Undergraduate Research and Creative Activities grants no. SU2024-21 and 031 and the College of Charleston School of Engineering, Computing and Mathematics.
The authors thank Chris Rodger for helpful discussion about the project.


\section* {Declaration of Generative AI and AI-Assisted Technologies in the Manuscript Preparation Process}
During the preparation of this work the authors used ChatGPT and Grammarly in order to help create LaTex images and correct grammar respectively. After using this tool/service, the author(s) reviewed and edited the content as needed and take full responsibility for the content of the published article.


\bibliographystyle{plain}
\bibliography{equitablebib.bib}

\end{document}